
\documentclass[12pt]{article}

\usepackage{tikz,amsthm,amsmath,amstext,amssymb,epsfig,euscript, mathrsfs, dsfont,pspicture,multicol,graphpap,graphics,graphicx,times,enumerate,subfig,sidecap,
wrapfig, color}
\usepackage{enumerate}

\def\ms{\medskip}

\def\1{{\mathds 1}}
\usepackage{hyperref}
\hypersetup{backref,  pdfpagemode=FullScreen} 
\usepackage{esint}





\def\barintgerm_#1{\mathchoice
{\mathop{\vrule width 6pt height 3 pt depth -2.5pt
\kern -8.8pt \intop}\nolimits_{#1}}%
{\mathop{\vrule width 5pt height 3 pt depth -2.6pt
\kern -6.5pt \intop}\nolimits_{#1}}%
{\mathop{\vrule width 5pt height 3 pt depth -2.6pt
\kern -6pt \intop}\nolimits_{#1}}%
{\mathop{\vrule width 5pt height 3 pt depth -2.6pt \kern -6pt
\intop}\nolimits_{#1}}}

\theoremstyle{plain}
\newtheorem{theorem}{Theorem}

\newtheorem{lemma}[theorem]{Lemma}
\newtheorem{proposition}[theorem]{Proposition}

\theoremstyle{definition}
\newtheorem{example}[theorem]{Example}

\newtheorem{remark}[theorem]{Remark}

\numberwithin{equation}{section}
\numberwithin{theorem}{section}



\setlength{\textheight}{8.5truein}
\setlength{\textwidth}{6.5truein}
\voffset-1in
\hoffset-.6in
\parindent 0pt

\begin{document}

\title{Non-relativity of K\"ahler manifold and complex  space forms
\thanks{This project was partially  supported by NSF of China (Grant No.
11301215, 11601422, 11671270, 11871044), Natural Science Foundation of Shannxi Province (2019JQ-398) and Scientific Research Program of Shaanxi Provincial Education
Department (19JK0841).
}}
\author{Xiaoliang Cheng
\and  Yihong Hao
}

\maketitle

\abstract

We study the
non-relativity for  two real analytic K\"ahler manifolds   and  complex space forms of three types.
The first one is a K\"ahler manifold whose polarization of  local K\"ahler potential   is  a Nash function
in a local coordinate.
The second one is  the Hartogs domain equpped with two canonical metrics whose 
 polarizations of the K\"ahler potentials are the diastatic functions.

\ms
\textbf{Key words:}
K\"ahler manifold, Hartogs domain, isometric embedding, Nash algebraic function;

\ms
\textbf{Mathematics Subject Classification (2000):} 32H02, 32Q40, 53B35

\tableofcontents

\section{Introduction}\label{sec:1}

The problem of the existence for holomorphic isometric embeddings  from a K\"ahler manifold into complex space forms has aroused interest for
many mathematicians. In  1953, Calabi \cite{[C]} obtained an important result, i.e. the global extendability and rigidity  of a local holomorphic isometry into a complex space form. 
He provided an algebraic criterion to find out whether a complex manifold admits or not such holomorphic isometric embeddings into complex space forms.
Afterwards, there appeared many important studies about the characterization and classification of K\"ahler submanifolds
of complex space forms. Those results had been summarized in \cite{[LZ]}. 
Within the case of Hermitian symmetric spaces
of different types, Di Scala and Loi  \cite{[DL2]} generalized Calabi's
non-embeddability  result in \cite{[DL1]}. In addition, two complex manifolds are called relatives if they have a common K\"ahler submanifold with their induced metrics.

In 1987, Umehara \cite{[U]} proved that two complex space forms with different curvature signs cannot have a common K\"ahler
submanifold with their induced metrics. In 2017, Cheng, Scala and Yuan \cite{[CSY]} gave necessary and sufficient conditions for Fubini-Study space of finite dimension and different curvatures   to be relatives, which is a non-trivial generalization  of Umehara's results.

For the relativity between K\"ahler manifold and projective K\"ahler manifold,
Di Scala and Loi \cite{[DL2]} proved that a bounded domain with its Bergman metric
is not relative to  any projective K\"ahler manifold  in 2010.
They also showed that Hermitian symmetric spaces of non-compact  and any projective K\"ahler manifold are not weakly relatives.
 This implies that Hermitian symmetric spaces of compact and noncompact type are not relatives to each others.
 Loi and Mossa \cite{[LM]} showed that a   bounded homogeneous domains with a homogeneous K\"ahler metric and any projective K\"ahler manifold are not relatives in 2015. Zedda  \cite{[Z]} gave a sufficient condition for a K\"ahler manifold are strongly not relative to any projective K\"ahler manifold in 2017. As an application, they got that the Bergman-Hartogs domain and Fock-Bargmann-Hartogs domain are strongly not relative to any projective K\"ahler manifold.

For the relativity between K\"ahler manifold and the complex Euclidean
space  $\mathbb{C}^n$ with the flat metric $g_0$, Huang and Yuan \cite{[HY2]} proved that  a Hermitian symmetric space of noncompact type and $(\mathbb{C}^n, g_0)$ are not relatives by using different argument in 2015. Cheng and Niu \cite{[CN]} proved that  the Cartan-Hartogs domain equipped with it's Bergman metric
and $(\mathbb{C}^n, g_0)$ are not relatives in 2017. 
Su, Tang and Tu \cite{[STT]} proved  the symmetrized polydisc endowed with its canonical metric and $(\mathbb{C}^n, g_0)$ are not relatives in 2018.

\ms
In this paper, firstly, we study the
non-relativity for  a K\"ahler manifold   and  three types complex space forms 
$(\mathbb{CP}_{b}^{n}, g_{FS})$, $(\mathbb{CH}_{b}^{n}, g_{hyp})$ and $(\mathbb{C}^n, g_0)$ by using the technique developed
in \cite{[HY2]}.
The key condition is the polarization of any local K\"ahler potential  of this K\"ahler manifold is  a Nash algebraic function.
Our results can be seen as a generalization of \cite{[CN]} \cite{[HY2]}\cite{[STT]}.
After that, we give three criteration theorems on the non-relativity  between  the Hartogs domain 
and  complex space forms of three types by using Calabi's rigidity theorem.

\section{Nash function and the relativity}
\  \  \  \  In this section, we recall several basic facts about the complex space forms, and Nash function, which will be used in the subsequent section. Then we prove the non-existence of common K\"ahler submanifolds of the complex  space forms and a class of real analytic K\"ahler manifolds.

There are three types of complex space forms  according to the sign of their constant holomorphic sectional curvature:
\begin{itemize}
  \item Complex Euclidean space $\mathbb{C}^{n}$ is the complex linear space with the flat metric  $g_{0}$ 
  whose associated K\"ahler form 
$$\omega_{0}=\frac{\sqrt{-1}}{2}\partial \bar\partial \sum_{i=1}^{n}|z_i|^2.$$
  \item Complex projective space $\mathbb{CP}^{n}_{b}$ of complex dimension $n<\infty$, with the Fubini-Study metric
$g_{FS} $ of holomorphic bisectional curvature $4b$ for $b>0$. Let $[Z_{0},\cdots, Z_{n}]$ be homogeneous coordinates,
 $U_{0}=\{[Z_0, Z_1, \cdots, Z_n]| Z_{0}\neq 0\}.$  Define affine coordinates $z_{1},\cdots,z_{n}$ on $U_{0}$ by $z_{j}=\frac{Z_j}{b Z_{0}}, j=1, 2, \cdots, n $. 
The K\"ahler 
form is $$\omega_{FS}=\frac{\sqrt{-1}}{2b}\partial \bar\partial\log(1+b\sum_{j=1} ^{n}|z_j|^2)\ \ \text{for} \ \ b>0.$$

  \item Complex hyperbolic space $\mathbb{CH}_{b}^{n}$ of complex dimension $n<\infty$, namely the unit ball $B \subset  \mathbb{C}^{n}$ given by:
$$B=\{z\in \mathbb{C}^{n}\big| |z_{1}|^2+\cdots+|z_{n}|^2<-b\},$$
endowed with the hyperbolic metric $g_{hyp}$ of constant holomorphic sectional curvature $4b$, for $b < 0$. Fixed a coordinate system around a point $p \in B$, the hyperbolic metric $$\omega_{hyp}=\frac{\sqrt{-1}}{2b}\partial \bar\partial\log(1+b\sum_{j=1} ^{n}|z_j|^2) \ \ \text{for} \ \ b<0.$$
\end{itemize}

Let $D$ be an open subset of ${\mathbb C}^n$. Let $f$ be a holomorphic function on $D$.  $f$ is called a Nash function at $x_{0}\in D$ if there exists an
open neighbourhood $U$ of $x_{0}$ and a polynomial $ P : \mathbb{C}^n \times \mathbb{C}\rightarrow \mathbb{C}, P\neq 0$, such that $P(x, f (x)) = 0$ for $x \in U$. A holomorphic function defined on $D$ is said to
be a Nash function if it is a Nash function at every point of $D$. The family of Nash functions on $D$ we denote by $\mathcal{N}(D)$.
The examples of Nash functions on a fixed open subset of $D$ are
the restrictions of polynomials and rational functions, holomorphic on $D$. 

\begin{lemma}\cite{[Tw]} \label{lemA}
If $D$ is an open subset of $\mathbb{C}^{n}$, $f\in \mathcal{N}(D)$, then
$\frac{\partial f}{\partial x_{i}} \in \mathcal{N}(D)$, for $i=1,\cdots, n.$
\end{lemma}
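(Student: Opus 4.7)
The plan is to differentiate the defining polynomial relation of $f$ and then invoke elimination theory to turn the chain-rule identity into an algebraic relation for $\partial_{x_i} f$. The argument is purely local: Nash-ness is defined pointwise, so it suffices to check it at an arbitrary $x_0 \in D$.

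First I would fix $x_0\in D$, take a neighborhood $U$ of $x_0$ and a nonzero polynomial $P\in\mathbb{C}[x,y]$ with $P(x,f(x))=0$ on $U$, and reduce to the case where $P$ is of minimal $y$-degree among such polynomials. This minimality implies that $P$ is irreducible as an element of $\mathbb{C}(x_1,\dots,x_n)[y]$ (otherwise a factor of strictly smaller $y$-degree would annihilate $f$ on a smaller neighborhood, contradicting minimality via the identity principle). In particular $\partial_y P(x,y)$ has strictly smaller $y$-degree than $P$ and is not identically zero, and hence $\partial_y P(x,f(x))\not\equiv 0$ on $U$.

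Next I would differentiate $P(x,f(x))=0$ with respect to $x_i$, obtaining the chain-rule identity
\[
\partial_{x_i} P(x,f(x)) + \partial_y P(x,f(x))\cdot \partial_{x_i} f(x) = 0,
\]
valid on $U$. On the dense open subset $U'\subset U$ where $\partial_y P(x,f(x))\neq 0$ this yields
\[
\partial_{x_i} f(x) \;=\; -\,\frac{\partial_{x_i} P(x,f(x))}{\partial_y P(x,f(x))}.
\]
So $\partial_{x_i} f$ is expressed as a rational function of $x$ and $f(x)$; the remaining task is to eliminate $f(x)$ and produce a polynomial relation in $x$ and $\partial_{x_i} f(x)$ alone.

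For the elimination step I would set $g(x)=\partial_{x_i} f(x)$ and note that $y=f(x)$ is a simultaneous root (as a function of $y$, with $x$ and $g(x)$ treated as parameters) of the two polynomials
\[
P(x,y), \qquad \partial_y P(x,y)\,z + \partial_{x_i} P(x,y),
\]
when $z=g(x)$. Then I would form the resultant
\[
R(x,z)\;=\;\operatorname{Res}_{y}\!\bigl(P(x,y),\;\partial_y P(x,y)\,z + \partial_{x_i} P(x,y)\bigr)\in\mathbb{C}[x,z],
\]
which vanishes at $(x,g(x))$ by construction. The main obstacle, and the only delicate point, is to verify $R\not\equiv 0$. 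This is where the irreducibility of $P$ in $\mathbb{C}(x)[y]$ is essential: for $R$ to vanish identically as a polynomial in $x,z$, the two arguments of the resultant would need to share a $y$-factor in $\mathbb{C}(x,z)[y]$, forcing $P(x,y)$ to divide $\partial_y P(x,y)\,z + \partial_{x_i} P(x,y)$ (since $P$ is irreducible and has larger $y$-degree than $\partial_y P$). Comparing $y$-degrees, this would require $\partial_y P \equiv 0$ and $\partial_{x_i}P$ to be a multiple of $P$, which contradicts $\partial_y P\not\equiv 0$ established above. Hence $R\not\equiv 0$, and $R(x,\partial_{x_i} f(x))=0$ on $U$ shows $\partial_{x_i} f$ is Nash at $x_0$. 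Since $x_0\in D$ was arbitrary, $\partial_{x_i} f\in\mathcal{N}(D)$.
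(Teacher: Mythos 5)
Your proof is correct, but there is nothing in the paper to compare it against: Lemma \ref{lemA} is quoted from Tworzewski's paper \cite{[Tw]} and no proof is given in this text. Your argument is the standard one and each step checks out: the minimal-$y$-degree annihilator is irreducible over $\mathbb{C}(x_1,\dots,x_n)$ because the ring of holomorphic functions on a connected neighbourhood is an integral domain (so a nontrivial factorization would produce a lower-degree annihilator); minimality then forces $\partial_y P(x,f(x))\not\equiv 0$; the chain rule gives $\partial_y P(x,f(x))\,\partial_{x_i}f(x)+\partial_{x_i}P(x,f(x))=0$ on all of $U$ (you do not actually need to restrict to the dense open set $U'$, since the resultant argument only uses this identity, not the quotient); and the resultant $R(x,z)$ vanishes at $(x,\partial_{x_i}f(x))$ because $\operatorname{Res}_y(A,B)=UA+VB$ for suitable $U,V$. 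Your nonvanishing argument for $R$ is also sound, with one small imprecision: if the irreducible $P$ divided $\partial_y P\cdot z+\partial_{x_i}P$ in $\mathbb{C}(x,z)[y]$, the degree comparison forces that polynomial to be identically zero, i.e.\ $\partial_y P\equiv 0$ \emph{and} $\partial_{x_i}P\equiv 0$ (not merely ``$\partial_{x_i}P$ a multiple of $P$''), and the first of these is the contradiction you want. An equivalent, slightly slicker packaging of your elimination step: $\partial_{x_i}f(x)$ lies in the field $\mathbb{C}(x)(f(x))$, which is a finite, hence algebraic, extension of $\mathbb{C}(x)$, so it satisfies a nonzero polynomial relation over $\mathbb{C}[x]$; your resultant is just the explicit witness of that relation.
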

\begin{lemma}\cite{[Tw]}\label{Implicit theorem}
Let $\Omega$ be an open subset of $\mathbb{C}^{n}\times\mathbb{C}^{m},$ and $(x_{0}, y_{0})\in \Omega$. Assume that 
$G: (x,y)\rightarrow G(x,y)\in \mathbb{C}^{m},$ is a Nash mapping such that $G(x_{0}, y_{0})=0$ and 
$$\det(\frac{\partial G}{\partial y}(x_{0}, y_{0})) \neq 0,$$
then exist open neighbourhoods $U$, $V$ of $x_{0}$ and $y_{0}$ respectively and  a Nash mapping $F: U\rightarrow V $such
that the Nash subset $F=G^{-1}(0)  \bigcap (U\times V)$.
\end{lemma}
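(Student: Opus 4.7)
The plan is to first apply the classical holomorphic implicit function theorem and then upgrade its conclusion from holomorphic to Nash via an elimination argument. Since each component $G_j$ is Nash, hence in particular holomorphic, and $\det(\partial G/\partial y)(x_0,y_0)\ne 0$ by hypothesis, the classical holomorphic implicit function theorem yields open neighborhoods $U\subset\mathbb{C}^n$ of $x_0$ and $V\subset\mathbb{C}^m$ of $y_0$ together with a unique holomorphic map $F:U\to V$ satisfying $F(x_0)=y_0$ and
\begin{equation*}
G^{-1}(0)\cap(U\times V)=\operatorname{graph}(F).
\end{equation*}
Only the Nash property of each component $F_i$ remains to be established.

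To exploit the Nash hypothesis on $G$, for each $j=1,\dots,m$ pick an irreducible nonzero polynomial $P_j\in\mathbb{C}[x,y,w]$ with $P_j(x,y,G_j(x,y))\equiv 0$ near $(x_0,y_0)$. Substituting $y=F(x)$ and using $G_j(x,F(x))=0$ yields the polynomial identity
\begin{equation*}
Q_j(x,F(x))\equiv 0 \text{ on } U, \qquad Q_j(x,y):=P_j(x,y,0).
\end{equation*}
The Jacobian condition forces $G_j\not\equiv 0$ (otherwise the $j$-th column of $\partial G/\partial y$ would vanish), so by irreducibility $w\nmid P_j$ and hence $Q_j\not\equiv 0$. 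Consequently $\operatorname{graph}(F)$ lies in the algebraic set $A:=\{Q_1=\cdots=Q_m=0\}\subset\mathbb{C}^n\times\mathbb{C}^m$.

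To isolate each individual $F_i$, I would project via $\pi_i:(x,y)\mapsto(x,y_i)$ and eliminate the other $y$-variables from the system $Q_1=\cdots=Q_m=0$ using successive resultants (or a Gr\"obner basis). By Chevalley's theorem $\pi_i(A)$ is constructible, and since $\operatorname{graph}(F)$ is an $n$-dimensional irreducible analytic component of $A$, the Zariski closure of $\pi_i(\operatorname{graph}(F))$ lies in a proper algebraic hypersurface of $\mathbb{C}^{n+1}$. This yields a nonzero polynomial $\Phi_i(x,y_i)$ with $\Phi_i(x,F_i(x))\equiv 0$ on $U$, i.e., $F_i\in\mathcal{N}(U)$, as required. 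The main obstacle is precisely this elimination step: certifying that after eliminating $y_1,\dots,\widehat{y_i},\dots,y_m$ one obtains a \emph{nontrivial} polynomial relation for $F_i$. This requires either a careful dimension count exploiting the transversality built into the implicit function theorem, or explicit resultant manipulations in $\mathbb{C}(x)[y_1,\dots,y_m]$ to realize $\mathbb{C}(x,F_1,\dots,F_m)$ as a finite algebraic extension of $\mathbb{C}(x)$.
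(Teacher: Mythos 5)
The paper offers no proof of this lemma: it is quoted from Tworzewski \cite{[Tw]} as a black box, so there is no internal argument to compare yours against and your attempt must be judged on its own. Your overall strategy --- run the classical holomorphic implicit function theorem first, then upgrade $F$ from holomorphic to Nash by exhibiting polynomial relations --- is the natural one, and the first half is sound: the derivation of the nonzero polynomials $Q_j(x,y)=P_j(x,y,0)$ with $Q_j(x,F(x))\equiv 0$ is correct (the observation that $w\nmid P_j$ because $G_j\not\equiv 0$ is exactly right). In particular, for $m=1$ your argument is already complete, since a single nonzero polynomial $Q_1(x,y_1)$ vanishing on the graph of $F_1$ is precisely the paper's definition of $F_1$ being Nash.

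The gap --- which you honestly flag --- is the elimination step for $m\ge 2$, and it is a genuine one, not a routine verification. The assertion that $\operatorname{graph}(F)$ is an irreducible \emph{component} of $A=\{Q_1=\cdots=Q_m=0\}$ is unjustified: $A$ is an intersection of $m$ hypersurfaces in $\mathbb{C}^{n+m}$ and may have components of dimension strictly larger than $n$ through $(x_0,y_0)$ that contain $\operatorname{graph}(F)$; moreover, the Zariski closure of an $n$-dimensional analytic germ can have any dimension up to $n+m$ (think of the graph of a transcendental map), so nothing written so far prevents $\pi_i(\operatorname{graph}(F))$ from being Zariski dense in $\mathbb{C}^{n+1}$, in which case the resultant or Gr\"obner elimination you propose returns the zero polynomial. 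The claim $\dim\overline{\operatorname{graph}(F)}^{\mathrm{Zar}}=n$ is essentially equivalent to the conclusion being proved, so it cannot be assumed. To close the gap you must use the Jacobian hypothesis on $G$ a second time: differentiating $P_j(x,y,G_j(x,y))\equiv0$ in $y_i$ gives, on $\{G=0\}$, the identity $\partial Q_j/\partial y_i=-(\partial P_j/\partial w)(x,y,0)\cdot\partial G_j/\partial y_i$, so wherever the factors $(\partial P_j/\partial w)(x,y,0)$ are nonzero the matrix $\partial Q/\partial y$ is invertible and the \emph{polynomial} system $Q=0$ is itself transverse, forcing the relevant component of $A$ to have dimension $n$; one must still handle the degenerate case where some $(\partial P_j/\partial w)(x,F(x),0)$ vanishes identically along the graph (e.g.\ by passing to a suitable factor of $P_j$ or arguing at a generic point). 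Alternatively one can invoke the Henselianity of the ring of Nash germs. Either way, an additional idea is required beyond what you have written.
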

\begin{lemma}\label{lemCH}
Let $f(u,v)$ be a Nash function of $u,v$ on $U \times V \subset \mathbb{C}^{n}\times\mathbb{C}^{m}$ and 
$U_1 \times V_1 \subset \mathbb{C}\times\mathbb{C}$.
Let $(L(z), H(w)):=(l_{1}(z),\cdots, l_{n}(z), h_{1}(w),\cdots, h_{m}(w))$ be a holomorphic mapping from $U_1 \times V_1 $ to $U \times V$.
Then $$\frac{\partial^{\delta} f(L(z), H(w))}{\partial w^{\delta}}\big |_{w=w_0}$$ is a Nash function of $L(z)$ for $\delta=1,2,\cdots$.
\end{lemma}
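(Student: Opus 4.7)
The strategy is to expand $\partial_w^\delta f(L(z), H(w))$ via an iterated chain rule and recognize each resulting term as a Nash function in the first argument evaluated at $u = L(z)$.

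Since $L(z)$ does not depend on $w$, iterating the chain rule (Fa\`a di Bruno applied to $w \mapsto f(L(z), H(w))$) produces a finite expansion
\[
\frac{\partial^\delta f(L(z),H(w))}{\partial w^\delta} = \sum_{|\alpha|\le \delta}\frac{\partial^{|\alpha|}f}{\partial v^\alpha}(L(z),H(w))\, P_\alpha(w),
\]
where the finite sum is indexed by multi-indices $\alpha=(\alpha_1,\ldots,\alpha_m)$, and each $P_\alpha(w)$ is a universal polynomial in the derivatives $h_j^{(k)}(w)$ for $1\le j\le m$, $1\le k\le \delta$. Evaluating at $w=w_0$, each $P_\alpha(w_0)$ becomes a complex constant $c_\alpha$, and $v_0 := H(w_0)$ is a fixed point of $V$, so
\[
\left.\frac{\partial^\delta f(L(z),H(w))}{\partial w^\delta}\right|_{w=w_0} = \sum_\alpha c_\alpha\, F_\alpha(L(z)),\qquad F_\alpha(u) := \frac{\partial^{|\alpha|}f}{\partial v^\alpha}(u,v_0).
\]

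By iterated application of Lemma \ref{lemA} in the $v$-variables, $\frac{\partial^{|\alpha|}f}{\partial v^\alpha}$ belongs to $\mathcal N(U\times V)$. The key remaining step is that restricting a Nash function on $U\times V$ to the slice $\{v = v_0\}$ yields a Nash function on $U$: if $P(u,v,t)$ is a non-trivial polynomial relation for $\frac{\partial^{|\alpha|}f}{\partial v^\alpha}(u,v)$, then the holomorphy of $f$ and its $v$-derivatives at $v_0$ together with a minimal-polynomial / Nullstellensatz argument shows that $P$ may be chosen so that the specialization $\tilde P(u,t) := P(u,v_0,t) \in \mathbb{C}[u,t]$ is not identically zero. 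The identity $\tilde P(u,F_\alpha(u))\equiv 0$ then yields $F_\alpha\in\mathcal N(U)$.

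Finally, $\mathcal N(U)$ is a $\mathbb{C}$-algebra (closed under $+$ and $\cdot$, via the standard resultant construction on defining polynomials), so $g(u) := \sum_\alpha c_\alpha F_\alpha(u)$ lies in $\mathcal N(U)$ and the expression of interest equals $g(L(z))$, proving the claim. The main technical obstacle is the slice-restriction step, which requires careful selection of the defining polynomial to avoid degeneration at $v = v_0$; everything else amounts to chain-rule bookkeeping combined with the ring structure of Nash functions.
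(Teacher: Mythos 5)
Your proof is correct and follows essentially the same route as the paper's: differentiate in $w$ via the chain rule (you use the closed-form Fa\`a di Bruno expansion where the paper simply inducts on $\delta$), invoke Lemma \ref{lemA} for the $v$-derivatives, and evaluate at $w=w_0$. You are in fact more careful than the paper, whose three-line argument silently uses both the restriction-to-the-slice $\{v=v_0\}$ property and the closure of $\mathcal{N}(U)$ under sums and products, the two points you explicitly identify and justify.
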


\begin{proof}
By chain rules, we know
$$\frac{\partial f(L(z),H(w))}{\partial w}
=\sum_{i=1}^{m}\frac{\partial f(u,v)}{\partial v_{i}}\Big |_{(u,v)=(L(z),L(w))}\frac{\partial h_{i}(w)}{\partial w}.$$
Since $\frac{\partial f(u,v)}{\partial v_{i}}$ is Nash function of $u, v$ by Lemma \ref{lemA}, it is true for $\delta=1$.
By mathematical induction, the result is also correct for any $\delta\in \mathbb{N}^{+}$.

\end{proof}

\begin{lemma}\cite{[HY2]}\label{lemHY}
Let $D\subset \mathbb{C}^{n} $ be a connected open set, and $\xi=(\xi_1,\cdots, \xi_k)\in D$.
Let $f_{1}(\xi),\cdots, f_{l}(\xi)$ and $f$ be Nash functions on $D$. Assume that
$$\exp f(\xi)=\prod_{i=1}^{l}f_{i}(\xi)^{\mu_i}$$
for certain real numbers $\mu_{1}, \cdots,\mu_{l}$. Then $f(\xi)$ is constant on $D$.
\end{lemma}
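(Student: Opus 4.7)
The plan is a proof by contradiction. Assume that $f$ is non-constant on $D$. I will first reduce to the one-variable case, then pit the algebraicity of Nash functions against the exponential character of $e^{f}$ at a pole of the algebraic continuation of $f$.

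\textbf{Step 1 (Reduction to one complex variable).} Since $f$ is non-constant real analytic, I pick a point $\xi^{0}\in D$ and a direction $v\in\mathbb{C}^{n}$ with $\frac{d}{dt}f(\xi^{0}+tv)\bigl|_{t=0}\neq 0$. Restricting every function to the complex line $L=\{\xi^{0}+tv\}\cap D$ preserves the Nash property (restriction of an algebraic function to a line is algebraic), preserves the identity $\exp f=\prod_{i}f_{i}^{\mu_{i}}$, and keeps $f$ non-constant. Hence I may assume $n=1$ and treat $f,f_{1},\ldots,f_{l}$ as Nash functions of a single complex variable $t$.

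\textbf{Step 2 (Algebraic continuation).} A Nash function of one variable is a branch of an algebraic function, hence extends to a meromorphic function on the compact Riemann surface obtained from the defining affine plane curve by desingularisation and completion. Because $\exp f$ is single-valued on $D$, the identity selects consistent local branches of the factors $f_{i}^{\mu_{i}}$, and continuation along any path avoiding the (finitely many) branch loci of the $f_{i}$ preserves the identity. A non-constant meromorphic function on a compact Riemann surface must have at least one pole; pick such a pole $p$ of $f$. By a Zariski-generic perturbation of the line $L$ in Step~1, I may arrange that $p$ is not a branch point of any $f_{i}$.

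\textbf{Step 3 (Growth comparison yielding a contradiction).} In a local uniformiser $t$ centred at $p$ one has
\[
f(t)=c\,t^{-\alpha}\bigl(1+o(1)\bigr), \qquad \alpha\in\mathbb{Q}_{>0},\ c\neq 0,
\]
so $\exp f$ carries an essential singularity at $p$ whose modulus behaves like $\exp(\operatorname{Re}(c\,t^{-\alpha}))$, which blows up or decays faster than any polynomial in $|t|^{-1}$ along suitable rays $t\to 0$. On the other hand each $f_{i}$ has a Puiseux expansion $f_{i}(t)=c_{i}\,t^{\beta_{i}}(1+o(1))$ with $\beta_{i}\in\mathbb{Q}$, so the continued branch of $\prod_{i}f_{i}^{\mu_{i}}$ behaves like
\[
C\,t^{\sum_{i}\mu_{i}\beta_{i}}\bigl(1+o(1)\bigr),
\]
i.e.\ only algebraically, for any real exponents $\mu_{i}$. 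The identity $\exp f=\prod_{i}f_{i}^{\mu_{i}}$ cannot survive this exponential-versus-algebraic mismatch in a punctured neighbourhood of $p$, a contradiction.

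\textbf{Step 4 (Main difficulty).} The heart of the argument is the pole analysis of Step 3, which is elementary once Step 2 has been set up correctly. The genuine bookkeeping issue is Step 2: one must verify that the analytic continuation of the right-hand side from $D$ to a neighbourhood of the chosen pole $p$ remains single-valued and that no $f_{i}$ has a branch point or zero coinciding with $p$ in an obstructive way. Both points are handled by working on the Zariski-open complement of the finite branch/zero locus on the Riemann surface and choosing the continuation path inside this complement; the one-variable reduction of Step 1 is what makes this finiteness available.
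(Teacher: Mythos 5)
The paper offers no proof of this lemma at all---it is imported verbatim from \cite{[HY2]}---so there is nothing internal to compare against; your argument is essentially the standard proof from that source (going back to Huang's 1994 algebraicity lemma): restrict to a complex line, continue the one-variable algebraic function to a pole on its compactified Riemann surface, and play the essential singularity of $\exp f$ against the Puiseux-type (at worst polynomial) growth of $\prod_i |f_i|^{\mu_i}$. The one loose claim, that a Zariski-generic perturbation of the line lets you assume the pole $p$ is not a branch point of any $f_i$, is neither justified (the pole of $f$ moves with the line, so the avoidance is not obviously achievable) nor needed: your Step 3 already admits rational exponents $\beta_i$, and comparing moduli (using $|w^{\mu}|=|w|^{\mu}$, which also disposes of all branch ambiguities in the real powers) makes the exponential-versus-algebraic contradiction go through at an arbitrary pole of $f$ on a common compactified Riemann surface for $f, f_1,\dots,f_l$.
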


Let $M$ be an $n$-dimensional complex manifold endowed with a real analytic K\"ahler metric $g$. i.e. if fixed a local coordinate system $z$ on a neighbourhood $U$ of any point $p \in M$, it can be described on $U$ by a real analytic K\"ahler potential 
$\psi : U \rightarrow \mathbb{R}$. In that case the potential $\psi (z)$ can be analytically continued to an open neighbourhood 
$W \subset U \times \text{conj}(U)$ of the diagonal, where $\text{conj}(U)=\{z\in \mathbb{C}^{n}| \bar{z} \in U\}$. Denote this extension by $\psi(z,w)$. It is called the polarization of $\psi$.

\begin{theorem} \label{main theorem1}
Let $M$ be a K\"ahler manifold  admits a real analytic K\"ahler metric $g_{M}$.
Let $D \subset  \mathbb{C}$ be a connected open subset. 
Suppose that  $F: D \rightarrow \mathbb{CP}_{b}^{n} (or \ \mathbb{CH}_{b}^{n})$ and $L:D \rightarrow M $ are 
holomorphic mappings such that 
\begin{equation}\label{FL1}
F^{*}\omega_{\mathbb{C}^{n}}=L^{*}\omega_{M} \  on \ D,
\end{equation}
where the K\"ahler form $\omega_{M}=\frac{\sqrt{-1}}{2}\partial\overline{\partial} \psi$. 
If the polarization of any local K\"ahler potential $\psi$  is  a Nash function, then $F$ must be a constant map. Moreover, 
the K\"ahler manifold $(M, g_{M})$ and $(\mathbb{CP}_{b}^{n}, g_{FS}) \ (or \ (\mathbb{CH}_{b}^{n}, g_{hpy}))$  are not relatives.
\end{theorem}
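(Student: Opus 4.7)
The plan is to assume $F$ is non-constant and derive a contradiction by exhibiting an identity of the form $\exp(\text{Nash}) = \text{Nash}$, to which Lemma \ref{lemHY} then applies. The first step is to translate the isometry condition $F^*\omega_{FS} = L^*\omega_M$ (or $F^*\omega_{hyp}$, handled identically as both targets share the potential $\tfrac{1}{b}\log(1 + b|z|^2)$) into an identity of K\"ahler potentials and polarize: replacing $\bar z$ by an independent variable $w$ and setting $\bar F(w) := \overline{F(\bar w)}$ and $\bar L(w) := \overline{L(\bar w)}$, one obtains the holomorphic identity
\[
\tfrac{1}{b}\log\bigl(1 + b\langle F(z), \bar F(w)\rangle\bigr) = \psi(L(z), \bar L(w)) + h(z) + \bar h(w)
\]
on a neighborhood of the totally real diagonal $\{w = \bar z\}$ in $D \times \text{conj}(D)$, for some holomorphic $h$ on $D$. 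After composing $F$ with an automorphism of the target so that $F(z_0) = 0$ at a chosen base point $z_0 \in D$, I would evaluate this identity at $z = z_0$ and at $w = \bar z_0$ in turn to solve for $\bar h$ and $h$ in terms of $\psi$; substituting back and exponentiating gives
\[
1 + b\langle F(z), \bar F(w)\rangle = \exp\bigl(b\,\Phi(L(z), \bar L(w))\bigr),
\]
where $\Phi(u, v) := \psi(u, v) - \psi(u, \overline{L(z_0)}) - \psi(L(z_0), v) + \psi(L(z_0), \overline{L(z_0)})$ is Nash in $(u, v)$ by the hypothesis on $\psi$.

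Next, assuming $F$ non-constant, I would apply $\partial_w^\delta$ to both sides at $w = \bar z_0$. Since $\Phi(L(z), \overline{L(z_0)}) \equiv 0$ the exponential equals $1$ there, and by Fa\`a di Bruno together with Lemma \ref{lemCH}, each such derivative yields an identity
\[
\bigl\langle F(z),\,\bar F^{(\delta)}(\bar z_0)\bigr\rangle = N_\delta(L(z))
\]
with $N_\delta$ Nash in $L(z)$. After a unitary normalization of $\mathbb{C}^n$ restricting to the span of $\{F^{(\delta)}(z_0)\}_{\delta \geq 1}$, the conjugate vectors $\{\bar F^{(\delta)}(\bar z_0)\}$ span that subspace, and I would invert the linear system to produce $F_j(z) = G_j(L(z))$ for Nash $G_j$; the symmetric $z \leftrightarrow w$ argument gives $\bar F_j(w) = \bar G_j(\bar L(w))$ with Nash $\bar G_j$. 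Substituting back transforms the polarized identity into
\[
\exp\bigl(b\,\Phi(L(z), \bar L(w))\bigr) = 1 + b\sum_{j} G_j(L(z))\,\bar G_j(\bar L(w)),
\]
an equation between functions Nash in the pair $(u, v) = (L(z), \bar L(w))$. Lemma \ref{lemHY}, with $f = b\Phi$, $f_1 = 1 + b\sum_j G_j \bar G_j$, and $\mu_1 = 1$, then forces $b\,\Phi(L(z), \bar L(w))$ to be constant; since $\Phi$ vanishes at the base point this constant is zero, and differentiating once in $z$ and once in $w$ yields $\psi_{u\bar v}(L(z), \bar L(w))\, L'(z)\bar L'(w) \equiv 0$, contradicting positivity of $g_M$ along the direction $L'(z_0)$. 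Hence $F$ must be constant; the non-relativity statement follows immediately, since a common K\"ahler submanifold of positive dimension would supply a non-constant $F$ via a holomorphic disk contained in it.

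The hard part will be the inversion step: verifying that the conjugate Taylor vectors $\bar F^{(\delta)}(\bar z_0)$ really do span the working subspace after unitary normalization, which rests on the elementary fact that the Taylor coefficients of a non-constant holomorphic map generate the smallest affine subspace containing its image. A secondary technical point is that Lemma \ref{lemHY} nominally requires the identity to hold on an open subset of some $\mathbb{C}^k$, whereas the image of $(L(z), \bar L(w))$ traces out only a $2$-dimensional subvariety of $M \times \overline{M}$; this is to be addressed, at smooth points where $L'(z)\bar L'(w) \neq 0$, by exploiting the Nash algebraic structure inherited from the ambient space, so that both sides of the final identity, viewed as Nash functions on an open neighborhood in $M \times \overline{M}$, satisfy the hypotheses of the lemma.
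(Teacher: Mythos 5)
Your proposal follows the paper's strategy quite closely through the first two stages: polarize, normalize $h$, differentiate in $w$ at the base point to obtain a linear system for $F(z)$ with Nash right-hand sides, and invert on the span of the Taylor vectors. (Your variant of exponentiating first so that the system in $F(z)$ is genuinely linear is a small simplification over the paper, which differentiates the logarithmic form, picks up the nonlinear terms $P_\delta(F(z))$, and must invoke the Nash implicit function theorem, Lemma \ref{Implicit theorem}.) The problem is the last step, and you have in fact put your finger on it yourself without resolving it. Lemma \ref{lemHY} requires the identity $\exp f=\prod f_i^{\mu_i}$ to hold between Nash functions on a connected \emph{open} subset of some $\mathbb{C}^k$. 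Your final identity
\[
\exp\bigl(b\,\Phi(L(z),\bar L(w))\bigr)=1+b\sum_j G_j(L(z))\,\bar G_j(\bar L(w))
\]
is only known to hold on the image of $(z,w)\mapsto(L(z),\bar L(w))$, which is an at most $2$-dimensional subvariety of $\mathbb{C}^m\times\mathbb{C}^m$. Saying that both sides are ``Nash functions on an open neighborhood in $M\times\overline M$'' does not help: two Nash functions can certainly agree on a proper subvariety without agreeing on a neighborhood, so the hypotheses of Lemma \ref{lemHY} are simply not met, and the conclusion that $\Phi(L(z),\bar L(w))$ is constant does not follow from what you have written.

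The paper closes this gap with a genuine additional argument that your proposal is missing. If every component $l_j$ is itself a Nash function of $z$, then both sides of the identity are Nash functions of $(z,w)$ on the open set $D\times\mathrm{conj}(D)\subset\mathbb{C}^2$, and Lemma \ref{lemHY} applies directly in the variables $(z,w)$ --- no passage to $M\times\overline M$ is needed. Otherwise (Step 3 of the paper) one takes the field extension $\mathfrak{F}=\mathfrak{R}(l_1,\dots,l_m)$ of the Nash field $\mathfrak{R}$, selects a maximal algebraically independent subset $\{l_1,\dots,l_r\}$, writes the remaining $l_j$ and the $f_i$ as Nash functions of $(z,l_1,\dots,l_r)$, and proves via the annihilating-polynomial argument (if $\Psi(z,X,w)\not\equiv 0$ then its vanishing along $X=(l_1(z),\dots,l_r(z))$ forces an algebraic relation $a_0(z,l_1,\dots,l_r)=0$, contradicting independence) that the identity persists when the $l_j(z)$ are replaced by \emph{free} variables $X$ ranging over an open subset of $\mathbb{C}^r$. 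Only after this promotion does Lemma \ref{lemHY} apply, in the $X$ variables for fixed $(z_0,w_0)$. You need one of these two mechanisms; neither is supplied by ``exploiting the Nash algebraic structure inherited from the ambient space.'' A secondary quibble: once $\Phi(L(z),\bar L(w))\equiv 0$ you already get $\langle F(z),\bar F(w)\rangle\equiv 0$ and hence $\|F(z)\|^2\equiv 0$ by setting $w=\bar z$, so the detour through positivity of $g_M$ (which anyway says nothing if $L$ is constant) is unnecessary.
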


\begin{proof}


 Assuming that $F=(f_1,
\cdots, f_n): D \rightarrow \mathbb{C}^n$ is not constant and $ L=({l_1}(z), \cdots, {l_{m}}(z)): D
\rightarrow M$ be holomorphic
maps satisfying equation \eqref{FL1}.  Without loss of generality that $D$ is simply connected, $0\in D, F(0)=0$.
By equation $\eqref{FL1}$, in local coordinate, we  have
$$\frac{1}{b}\partial\bar\partial \log (1+b\sum_{i=1}^{n}|f_i(z)|^2)=\partial\bar\partial  \psi(L(z)) ~~\text{for~~}z\in D,$$
where $\psi$ is the K\"ahler potential function of $g_{M}$.
There exists a holomorphic function $h$ on $D$ such that
\begin{equation}\label{function cp}
\log (1+b\sum_{i=1}^n |f_i(z)|^2) =b \psi(L(z))+h(z)+\overline{h}(z) \  \text{ for} \  \ z \in D.
\end{equation}
After polarization, it is equivalent to
\begin{equation}\label{polarization1}
\log(1+b\sum_{i=1}^n f_i(z) \overline{f_i}(w)) =b \psi(L(z),\overline{L}(w))+ h(z)+\overline{h}(w)~~\text{~for~~}(z, w) \in 
D\times \hbox{conj}({D}),
\end{equation}
where $\overline{f}_i(w) = \overline{f_i(\overline{w})}$, $\overline{L}(w) = \overline{L(\overline{w})}$ and
$\hbox{conj}({D})=\{z \in \mathbb{C} | \bar z \in D\}$.
\\  \

Let $w=0$, then the holomorphic function $$h(z)=-b \psi(L(z), 0)-\overline{h}(0) \ \ \text{and}\ \ 
\bar h(w)=-b \psi(0,\overline{L}(w))-h(0).$$

We divided three steps to  prove Theorem \ref{main theorem1}. \\

  \textbf{Step 1.}  For any $1\leq i \leq n$, $f_i(z)$ can be written as a  Nash function of  $L(z)$ by 
  the algebraic version of the implicit function theorem given by Lemma \ref{Implicit theorem}, shrinking $D$ toward the origin if needed.

Write
$D^\delta={\partial^\delta \over\partial w^\delta}$. 
 Applying   the
 differentiation $\partial \over\partial w$ to equation (\ref{polarization1}), we get for $w$ near $ 0$ the same form.
\begin{equation}\label{james}
\frac{\sum_{i=1}^{n} f_i(z) {\partial \over\partial w} \bar{f_i}(w)}{1+b\sum_{i=1}^n f_i(z) \bar{f_i}(w)}=\frac{\partial \psi({L}(z),\overline{{L}}(w))}{\partial w}.
\end{equation}
By Lemma \ref{lemA},  $\frac{\partial \psi({L}(z),\overline{{L}}(w))}{\partial w}\mid_{w=0}$ is a Nash function of $L(z)$.
We can rewrite it as
follows:
\begin{equation}\label{james-3}
\frac{F(z) \cdot D^1(\bar F(w)) }{1+bF(z) \cdot \bar F(w) }= \phi_1(w,  {l_1}(z), \cdots, {l_{m}}(z)),
\end{equation}
where  $\phi_1(w,{l_1}(z), \cdots, {l_{m}}(z))$ is
 a Nash  function in $L=({l_1}(z), \cdots, {l_{m}}(z))$ for fixed  $w=0$.
 
 $$\frac{bF(z) \cdot D^{2}\bar{F}(w)}{1+bF(z) \cdot \bar{F}(w)}-\frac{b^{2}(F(z) \cdot D\bar{F}(w))^{2}}{[1+bF(z) \cdot \bar{F}(w)]^{2}}=\phi_{2}(w,  {l_1}(z), \cdots, {l_{m}}(z)).$$
Differentiating the equation above, for the fixed point $w=0$, we get for any
$\delta$ the following equation 

\begin{equation}
\label{implicit} bF(z) \cdot D^{\delta}(\bar F(0))+P_{\delta}(F(z))
=\phi_{\delta}( {l_1}(z), \cdots, {l_{m}}(z)),
\end{equation}
where $P_{\delta}(F(z))$ is polynomial in $F(z)$ for any $\delta$ and fixed $z$ and has no constant and linear terms in the Taypor expansion with respect to $F$.
Here for $\delta> 0$ and the fixed $w=0$, $\phi_{\delta}(w,  {l_1}(z), \cdots, {l_{m}}(z))$
is a Nash function  in
${l_1}(z), \cdots, {l_{m}}(z)$.

Let ${\mathcal
L}:=\hbox{Span}_{\mathbb C}\{ D^{\delta}(\bar F(w))|_{w=0}\}_{\delta\ge
1}$ be a vector subspace of ${\mathbb C}^{n}$. Let
$\{D^{\delta_j}(\bar F(w))|_{w=0}\}_{j=1}^{\tau}$ be a basis for
$\mathcal L$. Then for a small open disc $\Delta_0$ centered at $0$ in
${\mathbb C}$, $\bar F(\Delta_0)\subset {\mathcal L}.$ Indeed, for any
$w$ near $0$, we have 
$$\bar F(w)=\bar F(0)+\sum_{ \delta \ge 1}{D^{\delta}(\bar F)(0)\over\delta
!}w^{\delta}=\sum_{ \delta \ge 1}{D^{\delta}(\bar F)(0)\over\delta
!}w^{\delta}\in {\mathcal L}.$$
from the Taylor expansion.

Let $\nu_j
~(j=1 \cdots, n-\tau)$ be a basis of the Euclidean orthogonal
complement of ${\mathcal L}.$  Then, we have

\begin{equation}
\label{implicit t} F(z)\cdot \nu_{j} =0, ~~ \text{for~each}~~ j=1,\cdots,
n-\tau.
\end{equation}
Consider the system consisting of (\ref{implicit}) at $w=0$ (with
$\delta=\delta_1,\cdots,\delta_\tau$) and (\ref{implicit t}).
The linear coefficient matrix in the left hand side of
the system at $w=0$ with respect to $F(z)$ is
\begin{equation}\notag
\begin{bmatrix}
D^{\delta_1}(\bar F(w))|_{w=0}\\
\vdots\\
D^{\delta_\tau}(\bar F(w))|_{w=0}\\
\nu_1\\
\vdots\\
\nu_{n-\tau}
\end{bmatrix}
\end{equation}
and is obviously invertible. Note that the right hand side of the
system of equations consisting of (\ref{implicit}) at $w=0$ (with
$\delta=\delta_1,\cdots,\delta_\tau$)
is a  Nash function in
$L$. By  the algebraic version of the implicit function theorem given by Lemma \ref{Implicit theorem}, there exist Nash algebraic functions $\hat f_i(z, X_1, \cdots, X_m)$ in $\hat U$ such that $f_i(z) = \hat f_i(z, l_1(z), \cdots, l_m(z))$ for  $z \in U$.

  \textbf{Step 2.} If all of the elements  $l_{1}, \cdots,  l_{m}$ are  Nash
functions, we know $f_{i}(z)$ is a holomorphic Nash algebraic function by Step 1.
We consider the equation $\eqref{polarization1}$ by the method in $[11]$. The equation is equivalent to the following
\begin{eqnarray*}
 1+b\sum_{i=1}^n f_i(z) \bar{f_i}(w)
&=&e^{ b\psi(L(z),\overline{L}(w))} e^{-b\psi(L(0),\overline{L}(w))-b\psi(L(z),\overline{L}(0)) -h(0)-\overline{h}(0))}.
\end{eqnarray*}

By Lemma \ref{lemHY}, $F$ is a constant map.

  \textbf{Step 3.}
Suppose there exist some elements in $(l_{1}, \cdots, l_{m})$, which are not Nash algebraic functions.
Let $\mathfrak{R}$ be the field of  Nash algebraic functions in $z$ over $D$. Consider the field extension 
$$\mathfrak{F}=\mathfrak{R}(l_{1}, \cdots, l_{m}),$$
 namely, the smallest subfield of meromorphic function field over $D$ containing  Nash algebraic functions and 
 $l_{1}, \cdots, l_{m}$.
 Without loss of generality, 
let $\mathcal{L}= \{l_1(z), \cdots, l_r(z)\}$ be the maximal algebraic independent subset in $\mathfrak{F}$, thus the transcendence degree of $\mathfrak{F} / \mathfrak{R}(\mathcal{G})$ is 0. Then there exists a small connected open subset $U$ with $0\in\overline{U}$ such that for each $j$ with $l_{j}\not \in \mathcal{L}$, we have some  Nash  functions 
$\{\hat l_{j}(z, X)\}$ in the neighborhood
  $\hat U$ of $\{ (z, l_1(z), \cdots, l_r(z)) | z\in U \}$ in $\mathbb{C}\times\mathbb{C}^r$ such that it holds that for any
    $l_{j} \not\in \{l_1, \cdots, l_r\}$,
    
    $$l_{j}(z) = \hat l_{j} (z, l_1(z), \cdots, l_r(z))$$ for any $z \in U$, where $X=(X_1, \cdots, X_r)$.
    
By the first step, we have seen that for fixed $w=0$, $\frac{\partial \log \psi(L(z),\overline{L}(w)}{\partial w}$ also is a Nash function in
$(l_{1}, \cdots, l_{r}) $. So there exists  a  Nash algebraic function $\hat f_i(z, X)$ in $\hat U$ such that $$f_i(z) = \hat f_i(z,l_1(z), \cdots, l_r(z)), i=1,\cdots, n$$ for  $z \in U$.

Denote $\hat{L}(z,X)=(\hat l_{r+1}(z, X), \hat l_{r+2}(z, X), \cdots, \hat l_{m}(z, X))$. Define a function 
as follows:
\begin{eqnarray*}
\Psi(z, X, w)&=&\log(1+b\sum_{i=1}^n \hat f_i(z, X) \bar{f_i}(w)) -b \psi((X,\hat{L}(z,X)),\overline{L}(w))\\
&&+b\psi((X,\hat{L}(z,X)), 0)+\overline{h}(0)-\overline{h}(w)~~\text{~for~~}(z, w) \in \hat{U}\times U.
\end{eqnarray*}
Then $\Psi(z, l_1, \cdots, l_r, w)\equiv 0$ on $U$. 
We claim that $\Psi(z, X, w)\equiv 0$ on $\hat{U}\times U$.
In fact, it only need to prove that $\frac{\partial \Psi}{\partial w}(z, X, w)\equiv 0$ on $\hat{U}\times U$.

Otherwise, then there exists a neighborhood $U_{0}$ of $0 \in U$ such that  $\frac{\partial \Psi}{\partial w}(z, X, w_{0})\neq 0$. For fixed
$w_{0}\in U_{0} , \frac{\partial \Psi}{\partial w}(z, X, w_{0})$ is a Nash function in $(z, X)$. Assume that its annihilating function is
$P(z,X,t) = a_{d}(z,X)t^{d} + \cdots + a_{0}(z,X),$
where $a_{0}(z, X)\neq 0$ on $U $, and ${a_{i}(z, X)}$ are holomorphic polynomials in $(z, X)$. Note that
$\Psi(z, l_1, ..., l_m,w_0)=0$  on $V$. Then $\frac{\partial \Psi}{\partial w}(z, l_1, \cdots, l_r,w_0)=0$ on $V$. Hence,
 \begin{eqnarray}
P(z, l_1(z), \cdots, l_r(z), \frac{\partial \Psi}{\partial w}(z, l_1(z), \cdots, l_r(z), w_0) )
   & = &P(z, l_1(z), \cdots, l_r(z), 0)\\
   & = &a_{0}(z, l_1, \cdots,  l_{r})=0.
 \end{eqnarray}
Therefore $\{l_1(z), \cdots, l_r(z)$\} are algebraic dependent over $\mathfrak{R}$. This is a contradiction.
 
 We have the following equation:
 \begin{eqnarray}
 \log(1+b\sum_{i=1}^n \hat f_i(z, X) \bar{f_i}(w))
  & = & b \psi((X,\hat{L}(z,X)),\overline{L}(w))-b\psi((X,\hat{L}(z,X)),0)\\
&&-\overline{h}(0)+\overline{h}(w)~~\text{~for~~}(z, w) \in \hat{U}\times U.
\end{eqnarray}

 If we have the equation
\begin{equation}\notag
\sum_{i=1}^n \hat f_i(z, X) \bar f_i(w) = 0,
\end{equation} then $\displaystyle\sum_{i=1}^{n} |f_i(z)|^2=\sum_{i=1}^{n} \hat f_i(z, l_1, \cdots, l_r) \bar f_i(z)=0$ by taking $w=z$.
This implies that $F$ is a constant map, which contradicts with the previous assumption.

If  there exist  $z_0, w_0$ such that
$$\displaystyle\sum_{i=1}^n \hat f_i(z_0, X) \bar f_i(w_0)\neq 0,$$  then 
 $\displaystyle\sum_{i=1}^n \hat f_i(z_{0}, X) \bar f_i(w_0)$ is a nonconstant Nash function in $X$.
 Consider the following equation  
$$1+b\sum_{i=1}^n \hat f_i(z_{0}, X) \bar{f_i}(w_0)
 =e^{-\overline{h}(0)+\overline{h}(w_0)} e^{b \psi((X,\hat{L}(z_{0},X)),\overline{L}(w_0))-b\psi((X,\hat{L}(z_{0},X)),0)}.$$



The right hand is  nonconstant holomorphic  Nash algebraic functions in $X$.
 It follows that $F$ is constant from Lemma \ref{lemHY}.
 The proof of Theorem 1.1 is completed.

\end{proof}

\begin{example}
Let $P(z,w)$ be a holomorphic polynomial over $\mathbb{C}^{2n}$ such that $g_{P}=\frac{i}{2}\partial \bar\partial P(z,\bar{z})$ 
 is the K\"ahler form associated to some K\"ahler metric $g_{P}$. Then
$(\mathbb{C}^{n}, g_{P})$ is not relatives with $\mathbb{CP}^{n}_{b}$ and $\mathbb{CH}^{n}_{b}$.
This result can be seen as a generalization of  Umehara's result in \cite{[U]}.

\end{example}

 \begin{theorem} \label{main theorem2}
Let $M$ be a K\"ahler manifold admits a real analytic K\"ahler metric $g_{M}$.
Let $D \subset  \mathbb{C}$ be a connected open subset. 
Suppose that    $F: D \rightarrow \mathbb{C}^{n}$ and $L:D \rightarrow M $ are 
holomorphic mappings such that 
\begin{equation}\label{FL}
F^{*}\omega_{\mathbb{C}^{n}}=L^{*}\omega_{M} \  on \ D,
\end{equation}
where the K\"ahler form $\omega_{M}=\frac{\sqrt{-1}}{2}\partial\overline{\partial} \psi$. 
If the polarization of $e^{ \psi}$  is  a holomorphic Nash algebraic function,
then $F$ must be a constant map. Moreover, $(M, g_{M})$ and $(\mathbb{C}^{n}, g_{\mathbb{C}^{n}})$  are not relatives.



\end{theorem}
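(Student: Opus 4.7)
The plan is to adapt the three-step structure of the proof of Theorem \ref{main theorem1} to the flat target case. Assuming $F=(f_1,\ldots,f_n)$ is non-constant, $D$ simply connected, $0\in D$, and $F(0)=0$, the equation $F^*\omega_{\mathbb{C}^n}=L^*\omega_M$ produces a holomorphic $h$ on $D$ such that, after polarization,
$$\sum_{i=1}^{n} f_i(z)\overline{f_i}(w) \;=\; \psi\bigl(L(z),\overline{L}(w)\bigr) + h(z)+\overline{h}(w)$$
on a neighbourhood of $0$ in $D\times\mathrm{conj}(D)$. Setting $w=0$ and $z=0$ pins down $h(z)=-\psi(L(z),0)-\overline{h}(0)$ and $\overline{h}(w)=-\psi(0,\overline{L}(w))-h(0)$.

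The hypothesis concerns the polarization of $e^\psi$ rather than of $\psi$ itself, but this still suffices for Step 1. Since $e^{\psi(z,w)}$ is a nonvanishing Nash function, $\partial_w\psi=(\partial_w e^\psi)/e^\psi$ is Nash in $(z,w)$ as a quotient of Nash functions; iterating Lemma \ref{lemA}, every $\partial_w^\delta\psi(z,w)$ is Nash in $(z,w)$. A chain-rule argument as in Lemma \ref{lemCH} then makes $\partial_w^\delta\psi(L(z),\overline{L}(w))\bigr|_{w=0}$ a Nash function of $L(z)$. Differentiating the polarized identity in $w$ and evaluating at $w=0$ yields exactly the linear system in $F(z)$ used in Step 1 of Theorem \ref{main theorem1}, so Lemma \ref{Implicit theorem} again furnishes Nash algebraic functions $\hat f_i(z,X)$ with $f_i(z)=\hat f_i(z,l_1(z),\ldots,l_m(z))$ near the origin.

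The key structural change occurs in Step 2, where instead of dividing through a logarithm I would exponentiate the polarized identity, substituting the closed forms for $h$ and $\overline{h}$, to obtain
$$\exp\bigl(\sum_{i=1}^n f_i(z)\overline{f_i}(w)\bigr) \;=\; \frac{e^{\psi(L(z),\overline{L}(w))}}{e^{\psi(L(z),0)}\, e^{\psi(0,\overline{L}(w))}}\cdot e^{-h(0)-\overline{h}(0)}.$$
When every $l_j$ is Nash, every factor on the right is Nash in $(z,w)$, nowhere vanishing (being an exponential), so Lemma \ref{lemHY} forces $\sum_i f_i(z)\overline{f_i}(w)$ to be constant; restricting to $w=z$ and using $F(0)=0$ yields $F\equiv 0$, a contradiction. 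Step 3 then handles the general case by repeating the transcendence-degree construction of Theorem \ref{main theorem1}: select a maximal algebraically independent subset $\{l_1,\ldots,l_r\}$ over the field of Nash functions in $z$, express the remaining $l_j$'s and the $f_i$'s as Nash functions of $(z,l_1,\ldots,l_r)$, introduce the auxiliary function
$$\Psi(z,X,w)=\sum_i \hat f_i(z,X)\overline{f_i}(w)-\psi\bigl((X,\hat L(z,X)),\overline{L}(w)\bigr)+\psi\bigl((X,\hat L(z,X)),0\bigr)+\overline{h}(0)-\overline{h}(w),$$
propagate $\Psi\equiv 0$ from the slice $X=(l_1(z),\ldots,l_r(z))$ to the full $\hat U\times U$ via the annihilator-polynomial trick (which is still available, since $\partial_w\psi$ is Nash), exponentiate at a fixed $(z_0,w_0)$, and apply Lemma \ref{lemHY} in the variable $X$ to conclude that $\sum\hat f_i(z_0,X)\overline{f_i}(w_0)$ is constant in $X$, which reduces to the two subcases (identically zero, forcing $F\equiv 0$; or a nonzero constant, forcing $\hat f_i$ independent of $X$ and thereby returning us to Step 2).

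The only delicate point I anticipate is purely bookkeeping: one has to verify that the exponential replacement for the logarithmic manipulation of Theorem \ref{main theorem1} truly stays inside the Nash category, so that each factor on the right of the displayed identity is genuinely Nash in the tracked variables (not merely a smooth function of Nash data) and nowhere zero, and that Lemma \ref{lemHY} therefore applies in the required form $\exp f=\prod g_i^{\mu_i}$. Once this is in place, non-relativity of $(M,g_M)$ and $(\mathbb{C}^n,g_{\mathbb{C}^n})$ follows in the standard way: a common K\"ahler submanifold, restricted to a holomorphic disc, would supply $F$ and $L$ satisfying the hypotheses with $F$ forced to be constant, contradicting the nondegeneracy of the submanifold.
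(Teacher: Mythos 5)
Your proposal is correct and takes essentially the same route as the paper, whose own proof of this theorem is simply ``by the similar argument in the proof of Theorem \ref{main theorem1}, omitted'': you reproduce the same three-step scheme (implicit-function step, Lemma \ref{lemHY} in the all-Nash case, transcendence-degree reduction), and your two adaptations --- obtaining $\partial_w\psi$ as the Nash quotient $(\partial_w e^{\psi})/e^{\psi}$ and exponentiating the polarized identity so that the hypothesis on $e^{\psi}$ is what feeds Lemma \ref{lemHY} --- are exactly the modifications the paper's deferral implicitly requires. In fact your write-up supplies more detail at these points than the paper does.
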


\begin{proof}
 Assuming that $F=(f_1,
\cdots, f_n): D \rightarrow \mathbb{C}^n$ is not constant and $ L=({l_1}(z), \cdots, {l_{m}}(z)): D
\rightarrow M$ be a holomorphic
map satisfying equation \eqref{FL}.  Without loss of generality that $D$ is simply connected, $0\in D, F(0)=0$.
By equation $\eqref{FL}$, we  have
$$\partial\bar\partial(\sum_{i=0}^{d}|f_i(z)|^2)=\partial\bar\partial \log [e^{\psi(L(z))}] ~~\text{for~~}z\in D,$$
where $\psi$ is the global K\"ahler potential function of $g_{M}$ on $M$.
There exists a holomorphic function $h$ on $D$ such that
\begin{equation}\label{function}
\sum_{i=1}^d |f_i(z)|^2 =\log [e^{\psi(L(z))}]+h(z)+\overline{h}(z) \  \text{ for} \  \ z \in D.
\end{equation}
After polarization, (\ref{function}) is equivalent to
\begin{equation}\label{polarization}
\sum_{i=1}^d f_i(z) \bar{f_i}(w) =\log [e^{ \psi(L(z),\overline{L}(w)}]+ h(z)+\overline{h}(w)~~\text{~for~~}(z, w) \in D\times
\hbox{conj}({D}),
\end{equation}
where $\bar{f}_i(w) = \overline{f_i(\overline{w})}$, $\bar{L}(w) = \overline{L(\overline{w})}$ and
$\hbox{conj}({D})=\{z \in \mathbb{C} | \bar z \in D\}$.
Let $w=0$, then the holomorphic function $$h(z)=-\log [e^{\psi(L(z),\overline{L}(0))}]-\overline{h}(0).$$

By the similar argument in the proof of Theorem \ref{main theorem1}, 
we  can prove Theorem \ref{main theorem2}. Hence we omit here.

\end{proof}

\begin{remark}
The results can be generalized to indefinite complex space forms:
$(\mathbb{C}^{n,s}, \omega_{\mathbb{C}^{n}}) $,
$(\mathbb{CP}^{n,s}, \omega_{\mathbb{CP}^{n}}) $ and 
$(\mathbb{CH}^{n,s}, \omega_{\mathbb{CP}^{n}}) $,
see the definition in \cite{[CSY]}.
\end{remark}



\section{The relativity between Hartogs domain and complex space forms}

Let $D\subset \mathbb{C}^{d}$ be a domain and $\varphi$ be a continuous positive function on $D$.
The domain
\begin{equation}\label{equ:Hartogs domain}
\Omega=\left\{(\xi, z)\in \mathbb{C}^{d_{0}}\times D : ||\xi||^{2}<\varphi(z)\right\}
\end{equation}
is called a Hartogs domain over  $D$ with $d_{0}$-dimensional fibers.
In this section, we consider the Hartogs domain equiped with its Bergman metric or almost K\"ahler-Einstein metric.


\subsection{Bergman metric}
In \cite{[Li]}, Ligocka gave a series representation formula of the
Bergman kernel of the Hartogs domain involving weighted Bergman kernels of the base domain.
She proved  that the Bergman kernel of $\Omega$
\begin{equation}\label{equ:Ligocka formula2}
K_{\Omega}(z,\xi),(w,\zeta)=\sum_{k=0}^{\infty}\frac{(k+1)_{d_{0}}}{\pi^{d_{0}}}K_{D,\varphi^{k+d_{0}}}(z,w)<\xi,\zeta>^{k},
\end{equation}
where $K_{D,\varphi^{k+d_{0}}}$ stands for the weighted Bergman kernel  with respect to the weight $\varphi^{k+d_{0}}$,
$<\cdot , \cdot> $ denotes the scalar product in $\mathbb{C}^{d_{0}}$,
$(k+1)_{d_{0}}$ denotes the Pochhammer polynomial of degree $d_{0}$, i.e.
$(k+1)_{d_{0}}=\frac{\Gamma(k+1+d_{0})}{\Gamma (k+1)}.$
When  $D$ is a bounded homogeneous domain  and $\varphi(z)=K(z,z)^{-s}$,
Ishi, Park and Yamamori \cite{Park} proved that the Bergman kernel is 
\begin{equation}\label{B kernel}
K_{\Omega}((z,\xi),(w,\eta))=\frac{K_{D}(z,w)^{d_{0}s+1}}{\pi^{d_{0}}}\sum_{j=0}^{d}\frac{c(s,j)(j+d_{0})!}{(1-t)^{j+d_{0}+1}}|_{K
_{D}(z,w)^{s}<\xi,\eta>},
\end{equation}
where the constants $c(s,j)$ satisfy that 
$F(ks)=\sum_{j=0}^{d} c(s, j)(k+1)_{j}$
and $F$ is the polynomial given by (18) in \cite{Park}.

 It is well known that any bounded homogeneous domain are equivalent to 
a homogeneous Siegel domain of the second kind. 
Recall the definition of  homogeneous Siegel domain of the second kind (see p.10 Def 1.2-1.3 in \cite{Gi}):
$$D(V,F)=\{(z,u)\in \mathbb{C}^{n}\times\mathbb{C}^{m}|Im(z)-F(u,u)\in V\}.$$
where $V$ is a convex cone in $\mathbb{R}^{n}$ not containing any straight lines and $F(u,v):\mathbb{C}^{n}\times\mathbb{C}^{m} \rightarrow  \mathbb{C}^{n}$ is a $V$-Hermitian form.
The Bergman kernel  of $D(V,F)$  is $$B(\eta,\zeta)=c\Big(\frac{w-\bar{z}}{2i}-F(v,u)\Big)^{2d-q},$$
where $c$ is a constant. See Theorem 5.1 in \cite{Gi}. 
Since $B(\eta,\zeta)$ is a Nash function,  $(D(V,F), g_{B})$ is not relatived with  $\mathbb{C}^{n}$  by Theorem  \ref{main theorem2}.
The property that the Bergman metrics are isometric between two biholomorphic equivalent domains implies the following result.

\begin{proposition}\label{BHD}
Any bounded homogeneous domain equipped with its Bergman metric and $\mathbb{C}^{n}$ are not relatives.
\end{proposition}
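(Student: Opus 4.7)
The plan is to reduce to a Siegel domain realization and invoke Theorem \ref{main theorem2}, then transport the conclusion back through the biholomorphism. The background input we need is the classical fact that every bounded homogeneous domain is biholomorphically equivalent to a homogeneous Siegel domain of the second kind $D(V,F)$, together with the standard property that the Bergman metric is invariant under biholomorphisms of bounded domains.

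First I would identify the K\"ahler potential of the Bergman metric of $D(V,F)$ as $\psi(\eta) = \log B(\eta,\bar\eta)$, where $B$ is the Bergman kernel. The polarization of $e^\psi$ is then precisely the off-diagonal kernel
\[
B(\eta,\zeta) = c\Bigl(\tfrac{w-\bar z}{2i} - F(v,u)\Bigr)^{2d-q}
\]
recalled from Gindikin's formula. Since this is a (fractional) power of a holomorphic polynomial, it satisfies a polynomial annihilating identity and is therefore a holomorphic Nash function on a suitable simply connected neighborhood avoiding the branch locus. Applying Theorem \ref{main theorem2} directly to $(D(V,F), g_B)$ then yields that it is not relative to $(\mathbb{C}^n, g_0)$, as already noted in the paragraph preceding the proposition.

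To obtain the statement for an arbitrary bounded homogeneous domain $\Omega$, I would fix a biholomorphism $\Phi : \Omega \to D(V,F)$, which satisfies $\Phi^* g_{B,D(V,F)} = g_{B,\Omega}$. Suppose, toward a contradiction, that a K\"ahler manifold $(S, g_S)$ admits holomorphic isometric immersions $\iota_1 : (S,g_S) \to (\Omega, g_{B,\Omega})$ and $\iota_2 : (S,g_S) \to (\mathbb{C}^n, g_0)$. Then $\Phi \circ \iota_1$ together with $\iota_2$ realizes $(S,g_S)$ as a common K\"ahler submanifold of $(D(V,F), g_B)$ and $(\mathbb{C}^n, g_0)$, contradicting the previous paragraph.

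The main technical point I expect to require care is verifying that $B(\eta,\zeta)$ is genuinely a Nash function in the sense demanded by Theorem \ref{main theorem2}. When $2d-q$ is a positive integer the kernel is a polynomial and there is nothing to check, but in the general homogeneous case one must be precise about the chosen branch of the fractional power and about the open set on which the resulting holomorphic function satisfies a polynomial identity over $\mathbb{C}[\eta,\zeta]$. Once this is settled, the transport via $\Phi$ is a short formal deduction and the proposition follows.
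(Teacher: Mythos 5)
Your proposal follows essentially the same route as the paper: the text preceding Proposition \ref{BHD} reduces to the Siegel domain realization $D(V,F)$, observes that Gindikin's kernel $B(\eta,\zeta)=c\bigl(\frac{w-\bar z}{2i}-F(v,u)\bigr)^{2d-q}$ is a Nash function so that Theorem \ref{main theorem2} applies, and then transports the conclusion back via the biholomorphic invariance of the Bergman metric. Your added care about the branch of the fractional power and the explicit contradiction argument for the transport step only make explicit what the paper leaves implicit.
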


By Theorem \ref{main theorem2} and the formula \eqref{B kernel}, we have the following theorem.

\begin{theorem}
Let $\Omega$ be a Hartogs domain  over bounded homogeneous domain $D$, $\phi(z)=K(z,z)^{-s}, $
 then $(\Omega, g_{B})$ and $\mathbb{C}^{n}$ are not  relatives.
\end{theorem}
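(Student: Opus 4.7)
The aim is to verify the hypothesis of Theorem \ref{main theorem2} for the Bergman potential $\psi_\Omega = \log K_\Omega((z,\xi),(z,\xi))$ of $g_B$ on $\Omega$ and then quote that theorem. Since $e^{\psi_\Omega} = K_\Omega((z,\xi),(z,\xi))$, the polarization of $e^{\psi_\Omega}$ is exactly the polarized Bergman kernel $K_\Omega((z,\xi),(w,\eta))$, so what must be checked is that this polarized kernel is a holomorphic Nash algebraic function in its arguments.

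First I would reduce to the case where $D$ is a Siegel domain of the second kind $D(V,F)$. Every bounded homogeneous domain is biholomorphic to some $D(V,F)$, and this biholomorphism lifts to a biholomorphism between the two associated Hartogs domains which is an isometry for their Bergman metrics: Bergman kernels transform by squared Jacobian factors, so the two K\"ahler potentials differ by a pluriharmonic function and the Bergman K\"ahler forms coincide. Since non-relativity with $(\mathbb{C}^n, g_0)$ depends only on the K\"ahler metric, we may assume without loss of generality that $D = D(V,F)$.

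In this Siegel realization, Theorem 5.1 of \cite{Gi} (as recalled above Proposition \ref{BHD}) gives $K_D(z,w) = c\,Q(z,w)^{-N}$, where $Q(z,w) = \frac{w-\bar z}{2i} - F(v,u)$ is a polynomial in $(z,w)$ and $N = 2d - q$ is an integer. Substituting into the Ishi--Park--Yamamori formula \eqref{B kernel}, the polarized kernel $K_\Omega((z,\xi),(w,\eta))$ becomes a finite sum of rational expressions in $Q(z,w)$, $Q(z,w)^{d_0 s + 1}$, $Q(z,w)^s$ and $\langle \xi, \eta\rangle$. Assuming $s$ rational, so that the fractional powers $Q^s$ and $Q^{d_0 s + 1}$ satisfy polynomial equations over $\mathbb{C}[z,w]$, the class $\mathcal{N}$ of Nash algebraic functions, which is closed under sums, products, quotients and algebraic extensions, contains every summand and hence $K_\Omega$ itself. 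Applying Theorem \ref{main theorem2} then yields the non-relativity of $(\Omega, g_B)$ and $(\mathbb{C}^n, g_0)$.

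The main obstacle is the Nash algebraicity of the non-integer powers $K_D^s$ and $K_D^{d_0 s + 1}$ appearing in \eqref{B kernel}, which forces an implicit rationality assumption on $s$; once that is in place, the remainder is bookkeeping--polarization, reassembly of the finite sum of rational terms, and the invocation of Theorem \ref{main theorem2}--together with the biholomorphic transfer from the Siegel realization back to the original bounded homogeneous $D$.
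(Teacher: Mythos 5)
Your proposal follows the paper's own route exactly: the paper's proof of this theorem consists precisely of invoking Theorem \ref{main theorem2} together with the Ishi--Park--Yamamori formula \eqref{B kernel}, with the Nash algebraicity of $K_D$ supplied by the Siegel-domain realization already used for Proposition \ref{BHD}, which is the reduction you carry out explicitly. The one point where you go beyond the paper is the rationality caveat on $s$ --- for irrational exponents the powers $K_D^{s}$ and $K_D^{d_0 s+1}$ in \eqref{B kernel} need not be Nash algebraic --- and this is a genuine hypothesis that the paper's one-line proof silently requires, so your write-up is if anything more careful than the original.
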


\subsection{Almost K\"ahler-Einstein metric}
Let $g_{\Omega}$ be an almost K\"ahler-Einstein metric  given by  the boundary, i.e.
$$\omega_{\Omega}=-\frac{\sqrt{-1}}{2} \partial \overline{\partial}\log (\varphi-||\xi||).$$
Obviously, the polarization of $(\varphi(z)-||\xi||^{2})^{-1}$  is a Nash function $\Omega\times \Omega$ if and only if the polarization of  $\varphi$ is a Nash function on $D\times D$.
By   Theorem \ref{main theorem2}, we have the following result.
\begin{theorem}
Let $\Omega$ be a Hartogs domain given by \eqref{equ:Hartogs domain}, if 
the polarization of  $\varphi$ is a Nash function on $D\times D$. Then  $(\Omega, g_{\Omega})$ and $\mathbb{C}^{n}$ are not relatives.
\end{theorem}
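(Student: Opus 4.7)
The plan is to reduce the statement directly to Theorem \ref{main theorem2}. Take $M=\Omega$ with the K\"ahler potential
$$\psi(z,\xi)=-\log\bigl(\varphi(z)-\|\xi\|^{2}\bigr),$$
so that $\omega_{\Omega}=\tfrac{\sqrt{-1}}{2}\partial\bar\partial\,\psi$. Then $e^{\psi}=\bigl(\varphi(z)-\|\xi\|^{2}\bigr)^{-1}$, and its polarization, obtained by replacing $\bar z$ by $\bar w$ and $\bar \xi$ by $\bar\eta$ treated as independent holomorphic variables, is
$$e^{\psi((z,\xi),(\bar w,\bar\eta))}=\frac{1}{\tilde\varphi(z,\bar w)-\langle\xi,\bar\eta\rangle},$$
where $\tilde\varphi(z,\bar w)$ denotes the polarization of $\varphi$.

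The first step is to verify that this polarization is a Nash function on a suitable neighborhood of the diagonal in $\Omega\times\mathrm{conj}(\Omega)$. By hypothesis, $\tilde\varphi(z,\bar w)$ is Nash on $D\times\mathrm{conj}(D)$, and $\langle\xi,\bar\eta\rangle=\sum_{i}\xi_{i}\bar\eta_{i}$ is a polynomial and hence Nash. Since the sum of two Nash functions is Nash (a standard consequence of the implicit function theorem Lemma \ref{Implicit theorem} applied to the resultant of their annihilating polynomials), the denominator $\tilde\varphi(z,\bar w)-\langle\xi,\bar\eta\rangle$ is Nash. Moreover, it is not identically zero, since on the diagonal it equals $\varphi(z)-\|\xi\|^{2}>0$ on $\Omega$.

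The second step is to observe that the reciprocal of a nonzero Nash function is again Nash: if $P(x,y)=\sum_{j=0}^{d}a_{j}(x)y^{j}$ annihilates a Nash function $g(x)$, then $Q(x,y):=y^{d}P(x,1/y)=\sum_{j=0}^{d}a_{j}(x)y^{d-j}$ annihilates $1/g(x)$. Thus $\bigl(\tilde\varphi(z,\bar w)-\langle\xi,\bar\eta\rangle\bigr)^{-1}$ is Nash, i.e.\ the polarization of $e^{\psi}$ is a holomorphic Nash algebraic function.

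With this checked, Theorem \ref{main theorem2} applies verbatim to $(\Omega,g_{\Omega})$, forcing any holomorphic map $F\colon D_{0}\to\mathbb{C}^{n}$ satisfying $F^{*}\omega_{\mathbb{C}^{n}}=L^{*}\omega_{\Omega}$ on a connected open $D_{0}\subset\mathbb{C}$ to be constant, and hence precluding the existence of a common K\"ahler submanifold of $(\Omega,g_{\Omega})$ and $(\mathbb{C}^{n},g_{0})$. There is no real obstacle in the argument; the only nontrivial input is the closure of the class of Nash functions under addition and reciprocation, both of which are standard and already used implicitly in the paper. The whole theorem is thus essentially a packaging result: the hypothesis on $\varphi$ is engineered to make the Kähler potential of the almost Kähler–Einstein metric fit exactly into the framework of Theorem \ref{main theorem2}.
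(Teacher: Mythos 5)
Your proposal is correct and follows essentially the same route as the paper: the paper likewise observes that $e^{\psi}=(\varphi(z)-\|\xi\|^{2})^{-1}$ has Nash polarization precisely when the polarization of $\varphi$ is Nash, and then invokes Theorem \ref{main theorem2}. You merely make explicit the closure of Nash functions under sums and reciprocals, which the paper treats as obvious.
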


In general, the polarization of  $\varphi$ may not be a Nash function. In this situation, we also can consider the relative problem between Hartogs domain and complex space forms by using Calabi's rigidity theorem.

The following lemma shows the  relation  between the diastatic functions of   $g_{D}$ and $g_{\Omega}$.
The diastatic function was introduced by Calabi in \cite{[C]}. It is a special (local) potential function. For more details, see
\cite{[C]} and \cite{[LZ]}.

\begin{lemma}\cite{HW}\label{lem:diastasis} 
 Let $g_{D}$ be a K\"ahler metric given by   $\frac{\sqrt{-1}}{2} \partial \overline{\partial} (-\log \varphi)$.    If $-\log\varphi$ is the diastasic function centered  at the origin for $(D, g^{D})$, then
 $-h\log(\varphi(z)-||\xi||^{2})$  is  the diastasic function centered  at the origin for $hg^{\Omega} $ for $h>0$.
\end{lemma}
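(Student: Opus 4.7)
The plan is to verify the claim by invoking Calabi's characterization of the diastasis: a local Kähler potential $\Phi$ for a real analytic Kähler metric $(N,g)$ is the diastasis centered at a point $p$ if and only if its polarization $\Phi(x,\overline{y})$ satisfies $\Phi(x,\overline{p})\equiv 0$ and $\Phi(p,\overline{y})\equiv 0$ in a neighbourhood of $p$; equivalently, the Taylor expansion of $\Phi$ at $p$ contains no nonconstant purely holomorphic or purely antiholomorphic monomials. I will check this condition for the candidate potential $-h\log(\varphi(z)-||\xi||^{2})$ at the origin of $\Omega$.

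First, I would observe that $-h\log(\varphi(z)-||\xi||^{2})$ is a global Kähler potential for $hg^{\Omega}$, which is immediate from the defining formula of $\omega_{\Omega}$ since
$$h\omega_{\Omega} = \frac{\sqrt{-1}}{2}\partial\overline{\partial}\bigl[-h\log(\varphi(z)-||\xi||^{2})\bigr].$$
Polarizing, the analytic continuation to a neighbourhood of the diagonal is
$$\Phi\bigl((z,\xi),(\overline{w},\overline{\eta})\bigr) = -h\log\bigl(\varphi(z,\overline{w})-\xi\cdot\overline{\eta}\bigr),$$
where $\varphi(z,\overline{w})$ denotes the polarization of $\varphi$ and $\xi\cdot\overline{\eta}$ is the bilinear extension of the Hermitian pairing on the fiber $\mathbb{C}^{d_{0}}$. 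At this stage the proof reduces to a substitution.

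Second, I would translate the hypothesis. By Calabi's characterization applied to $(D,g^{D})$, the assumption that $-\log\varphi$ is the diastasis of $g^{D}$ at $0\in D$ is equivalent to the identities $\varphi(z,0)\equiv 1$ and $\varphi(0,\overline{w})\equiv 1$ on a neighbourhood of the origin. Substituting $(w,\eta)=(0,0)$ into the polarized potential above gives $\Phi((z,\xi),(0,0)) = -h\log(\varphi(z,0)-0) = -h\log 1 = 0$, and the symmetric substitution $(z,\xi)=(0,0)$ yields $\Phi((0,0),(\overline{w},\overline{\eta})) = 0$. These are exactly the vanishing conditions required to conclude that $-h\log(\varphi(z)-||\xi||^{2})$ is the diastasis of $hg^{\Omega}$ centered at $0\in\Omega$.

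The argument is essentially a one-line calculation once the framework is in place, so there is no serious obstacle. The only point requiring care is correctly identifying the polarization of the Hermitian norm squared $||\xi||^{2}$ as the bilinear pairing $\xi\cdot\overline{\eta}$ (which vanishes when either $\xi=0$ or $\eta=0$) and ensuring that the vanishing of $\log\varphi$ along the coordinate slices lifts unchanged through the logarithm in the potential of $\Omega$; once this is observed, the statement follows directly from Calabi's characterization without any further input.
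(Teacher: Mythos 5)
The paper states this lemma without proof, quoting it from \cite{HW}, so there is no internal argument to compare against. Your proposal is correct and is the standard way to establish it: Calabi's characterization says that among all local potentials of a real analytic K\"ahler metric, the diastasis centered at $p$ is the unique one whose polarization vanishes identically when either argument is frozen at $p$ (equivalently, whose Bochner expansion at $p$ has no purely holomorphic or purely antiholomorphic terms). The hypothesis that $-\log\varphi$ is the diastasis of $g^{D}$ at $0$ gives $\varphi(z,0)\equiv\varphi(0,\overline{w})\equiv 1$, the polarization of $\|\xi\|^{2}$ is the bilinear pairing $\xi\cdot\overline{\eta}$ which vanishes on either coordinate slice, and hence $-h\log\bigl(\varphi(z,\overline{w})-\xi\cdot\overline{\eta}\bigr)$ vanishes when $(w,\eta)=(0,0)$ or $(z,\xi)=(0,0)$; since it is manifestly a potential for $hg^{\Omega}$ and the argument of the logarithm equals $1$ at the origin (so the polarization is well defined near the diagonal), the conclusion follows. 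The only point worth making explicit in a write-up is the uniqueness half of Calabi's characterization, which is what lets you upgrade ``a potential satisfying the vanishing conditions'' to ``the diastasis''; you invoke it implicitly and correctly.
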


The existence of  the full K\"ahler immersion from
  the Hartogs domain to $\mathbb{C}^{N}$, $\mathbb{CP}^{N}$ and $\mathbb{CH}^{N}$
  for  $N\leqslant +\infty$  has been discussed by Wang and Hao in \cite{HW}. 
  
  \begin{lemma}\cite{HW}\label{psd cp}
  \label{thm:CPS}
Let $\Omega$ be  as in \eqref{equ:Hartogs domain} and  $h$ be a positive number. Suppose that $\Omega$ is
a simply connected circular domain with center zero and
the function $-\log\varphi$ is the special K\"ahler potential function of  $g^{D}$ determined by its diastatic function.
Then $(\Omega, \alpha g^{\Omega})$ admits a full K\"ahler immersion into  $(\mathbb{CP}^{\infty}, g_{FS})$
if and only if $(D, (\alpha+\sigma)g^{D})$  admits a K\"ahler immersion into
$(\mathbb{CP}^{n}, g_{FS})$ or $(\mathbb{CP}^{\infty}, g_{FS})$ for all $\sigma\in \mathbb{N}$.

  \end{lemma}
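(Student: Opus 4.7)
The plan is to convert both sides to positive semidefiniteness conditions via Calabi's criterion and connect them through a binomial expansion in the fiber coordinate $\xi$. By Lemma~\ref{lem:diastasis}, the polarized diastasis of $(\Omega,\alpha g^{\Omega})$ at the origin is $-\alpha\log(\varphi(z,\bar w)-\langle\xi,\bar\eta\rangle)$, and that of $(D,(\alpha+\sigma)g^{D})$ is $-(\alpha+\sigma)\log\varphi(z,\bar w)$. Calabi's rigidity criterion then reformulates the claim as
\[
(\varphi(z,\bar w)-\langle\xi,\bar\eta\rangle)^{-\alpha}-1\succeq 0\ \text{with infinite rank}\ \iff\ \varphi(z,\bar w)^{-(\alpha+\sigma)}-1\succeq 0\ \text{for all}\ \sigma\in\mathbb{N},
\]
where $\succeq 0$ means positive semidefinite as a polarized holomorphic kernel.

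The bridge is Newton's binomial expansion
\[
(\varphi(z,\bar w)-\langle\xi,\bar\eta\rangle)^{-\alpha}=\sum_{\sigma=0}^{\infty}\binom{\alpha+\sigma-1}{\sigma}\varphi(z,\bar w)^{-(\alpha+\sigma)}\langle\xi,\bar\eta\rangle^{\sigma},
\]
which matches weight-$\sigma$ components under the $S^{1}$-action $\xi\mapsto e^{i\theta}\xi$ (well-defined by the circularity hypothesis on $\Omega$) with positive multiples of $\varphi^{-(\alpha+\sigma)}$. For the $\Leftarrow$ direction, write $\varphi^{-(\alpha+\sigma)}-1=\sum_{i}f_{i}^{(\sigma)}(z)\overline{f_{i}^{(\sigma)}(w)}$ with $f_{i}^{(\sigma)}(0)=0$, decompose $\langle\xi,\bar\eta\rangle^{\sigma}=\sum_{|k|=\sigma}\binom{\sigma}{k}\xi^{k}\bar\eta^{k}$, and assemble an explicit basepoint-preserving map
\[
(z,\xi)\longmapsto\bigl[\,1:\,\bigl(\sqrt{c_{\sigma,k}}\,\xi^{k}\bigr)_{|k|=\sigma\geq 1}:\,\bigl(\sqrt{c_{\sigma,k}}\,f_{i}^{(\sigma)}(z)\,\xi^{k}\bigr)_{\sigma\geq 0,\ i,\ |k|=\sigma}\,\bigr]\in\mathbb{CP}^{\infty},
\]
where $c_{\sigma,k}=\binom{\alpha+\sigma-1}{\sigma}\binom{\sigma}{k}>0$. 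A direct computation verifies that its pulled-back Fubini--Study potential matches the diastasis term by term, and fullness is automatic since the $\xi^{k}$ coordinates alone form an infinite linearly independent family.

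For the $\Rightarrow$ direction, suppose $(\varphi-\langle\xi,\bar\eta\rangle)^{-\alpha}-1=\sum_{\mu}F_{\mu}(z,\xi)\overline{F_{\mu}(w,\eta)}$ with $F_{\mu}(0,0)=0$, and expand each $F_{\mu}(z,\xi)=\sum_{k}F_{\mu}^{(k)}(z)\xi^{k}$. The absence of cross-terms $\xi^{k}\bar\eta^{l}$ with $k\neq l$ on the left (a consequence of circularity) forces the weight components to decouple, yielding $\sum_{\mu}F_{\mu}^{(k)}(z)\overline{F_{\mu}^{(k)}(w)}=c_{\sigma,k}\,\varphi(z,\bar w)^{-(\alpha+\sigma)}$ for each $|k|=\sigma\geq 1$, while specializing $\xi=\eta=0$ gives $\varphi^{-\alpha}-1\succeq 0$ directly. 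The main obstacle is promoting the weaker "$\varphi^{-(\alpha+\sigma)}\succeq 0$" to the "$\varphi^{-(\alpha+\sigma)}-1\succeq 0$" required by Calabi, and this is exactly where the hypothesis that $-\log\varphi$ is the diastatic function (not merely some K\"ahler potential) becomes indispensable: it forces $\varphi(z,0)\equiv\varphi(0,\bar w)\equiv 1$, so writing $\varphi^{-(\alpha+\sigma)}=\sum_{i}g_{i}(z)\overline{g_{i}(w)}$ and rotating the orthonormal basis so that $g_{i}(0)=0$ for $i\geq 1$, the identity $\sum_{i}g_{i}(z)\overline{g_{i}(0)}=\varphi(z,0)^{-(\alpha+\sigma)}=1$ forces $g_{0}\equiv 1$, whence $\varphi^{-(\alpha+\sigma)}-1=\sum_{i\geq 1}|g_{i}|^{2}\succeq 0$. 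Calabi's criterion then delivers the required K\"ahler immersion of $(D,(\alpha+\sigma)g^{D})$ into some $\mathbb{CP}^{n}$ or $\mathbb{CP}^{\infty}$, completing the proof.
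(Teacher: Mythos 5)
The paper does not actually reprove this lemma---it is imported verbatim from \cite{HW}---but your argument is correct and is essentially the standard one behind it: Calabi's criterion applied to the diastases supplied by Lemma \ref{lem:diastasis}, combined with the binomial expansion of $(\varphi-\langle\xi,\bar\eta\rangle)^{-\alpha}$ in the fiber variable, which is exactly the expansion the paper itself carries out in the proof of Lemma \ref{lem f} (cf.\ \eqref{hk} and the coefficients $\frac{(m+\alpha-1)!}{(\alpha-1)!\,m!}$). The only step stated too quickly is fullness in the ``if'' direction: linear independence of the $\xi^{k}$-components gives infinite rank and hence a full reduction onto $\mathbb{CP}^{\infty}$, rather than fullness of your explicit map exactly as written.
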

  
 The formula of the full K\"ahler immersion can be expressed by the K\"ahler immerson from the base space to
 complex space forms. 
 
 \begin{lemma}\label{lem f}
 If $f: (\Omega,g_{\Omega})\rightarrow \mathbb{CP}^{\infty}$ is a holomorphic map such that
 $f^{*}\omega_{FS}=\alpha \omega_{\Omega}$, then up to an unitary transformation of $\mathbb{CP}^{\infty}$,  it is
 given by : 
 \begin{equation}\label{f}
 f=[1,s,h_{\alpha},\cdots,\sqrt{\frac{(m+\alpha-1)}{(\alpha-1)!m!}}h_{\alpha+m} w^{m},\cdots]=[1,s,H],
 \end{equation}
 where $s=(s_{1},\cdots,s_{m},\cdots)$ with:
 $$s_{m}=\sqrt{\frac{(m+\alpha-1)}{(\alpha-1)!m!}} w^{m},$$
 and $h_{\beta}=(h_{\beta}^{1},\cdots,h_{\beta}^{j},\cdots)$ denotes the sequence of holomorphic maps on
 $D$ such that the immersion $\tilde{h}_{\beta}=(1,h_{\beta}) : (D, \omega_{D})\rightarrow\mathbb{CP}^{\infty}$, satisfies
$ h^{*}_\beta \omega_{FS}=\beta\omega_{D}$, i.e.:
 \begin{equation}\label{hk}
 1+\sum_{j=1}^{\infty}|h_{\beta}^{j}|^{2}=\frac{1}{\varphi^{\beta}}.
 \end{equation}
 \end{lemma}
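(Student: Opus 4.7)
The plan is to invoke Calabi's rigidity theorem together with \Lemma{diastasis} and the defining relation \eqref{hk}. By Calabi's criterion, a full K\"ahler immersion $f:(\Omega,\alpha g_\Omega)\to (\mathbb{CP}^\infty,g_{FS})$ is uniquely determined up to a unitary transformation of $\mathbb{CP}^\infty$ by the requirement that, in a neighborhood of the basepoint, the squared $\ell^2$-norm of the non-constant components of $f$ equals $e^{D_{\alpha g_\Omega}}-1$, where $D_{\alpha g_\Omega}$ is the diastasis of $\alpha g_\Omega$ at the origin. Thus it suffices to expand $e^{D_{\alpha g_\Omega}}$ into a (possibly infinite) sum of squared moduli of holomorphic monomials in $(\xi,z)$ and read off the components.

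First I would use \Lemma{diastasis} (applied with $h=\alpha$) to identify the diastasis of $\alpha g_\Omega$ centered at $0$ as $-\alpha\log(\varphi(z)-\|\xi\|^2)$, suitably normalized so that $\varphi(0)=1$. Exponentiating and factoring out $\varphi^{-\alpha}$, I would then perform the binomial expansion
\begin{equation*}
\frac{1}{(\varphi(z)-\|\xi\|^2)^{\alpha}}
=\varphi(z)^{-\alpha}\Bigl(1-\tfrac{\|\xi\|^2}{\varphi(z)}\Bigr)^{-\alpha}
=\sum_{m=0}^{\infty}\binom{m+\alpha-1}{m}\varphi(z)^{-(\alpha+m)}\|\xi\|^{2m}.
\end{equation*}
Here the binomial coefficient agrees with the Pochhammer quotient $\frac{(\alpha)_m}{m!}=\frac{(m+\alpha-1)!}{(\alpha-1)!\,m!}$ appearing in the statement. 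When $d_0>1$, the factor $\|\xi\|^{2m}$ is expanded as a sum of squared moduli of the normalized monomials of degree $m$ in the components of $\xi$, which I will continue to denote by $w^m$ abusively, as in the statement.

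Next I would substitute the identity \eqref{hk}, namely $1+\sum_{j}|h_\beta^j(z)|^2=\varphi(z)^{-\beta}$, with $\beta=\alpha+m$ into each summand, obtaining
\begin{equation*}
\frac{1}{(\varphi-\|\xi\|^2)^{\alpha}}
=\Bigl(1+\sum_j|h_\alpha^j|^2\Bigr)+\sum_{m=1}^{\infty}\binom{m+\alpha-1}{m}\|\xi\|^{2m}\Bigl(1+\sum_j|h_{\alpha+m}^j|^2\Bigr).
\end{equation*}
Collecting the pure $\xi$-terms into the components $s_m=\sqrt{\binom{m+\alpha-1}{m}}\,w^m$ and the mixed terms into $\sqrt{\binom{m+\alpha-1}{m}}\,h_{\alpha+m}^j\,w^m$, I get
\begin{equation*}
e^{D_{\alpha g_\Omega}}=1+\sum_m|s_m|^2+\sum_j|h_\alpha^j|^2+\sum_{m\geq1,\,j}\Bigl|\sqrt{\tbinom{m+\alpha-1}{m}}\,h_{\alpha+m}^j\,w^m\Bigr|^2,
\end{equation*}
which exhibits precisely the components listed in \eqref{f}. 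Finally, by Calabi's rigidity, any other full holomorphic isometric immersion of $(\Omega,\alpha g_\Omega)$ into $\mathbb{CP}^\infty$ must agree with this one up to a unitary transformation, giving the desired normal form.

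The main obstacle I anticipate is bookkeeping rather than conceptual: carefully checking the normalization of the diastasis at the base point (so that no stray constant ruins the generating function identity), handling the correct tensorial expansion of $\|\xi\|^{2m}$ when $d_0>1$ so that the monomials $w^m$ carry the right multinomial weights, and confirming that the resulting immersion is indeed full (i.e.\ the images of all the $s_m$ and $h_{\alpha+m}$ together span a dense subspace) so that Calabi's uniqueness applies. The actual algebraic identity itself reduces to the binomial series and is routine once the diastasis has been identified via \Lemma{diastasis}.
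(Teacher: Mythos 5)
Your proposal is correct and follows essentially the same route as the paper: both identify $\sum_j|f_j|^2$ with $(\varphi-\|\xi\|^2)^{-\alpha}$ via Calabi's diastasis criterion, expand this in the fiber variable (your binomial series is exactly the paper's Taylor/Bochner-coordinate expansion, whose coefficients $\frac{(m+\alpha-1)!}{(\alpha-1)!\,m!}$ agree with your $\binom{m+\alpha-1}{m}$), substitute \eqref{hk} with $\beta=\alpha+m$, and conclude by Calabi's rigidity theorem. The bookkeeping issues you flag (normalization $\varphi(0)=1$, multinomial weights when $d_0>1$) are real but minor, and the paper glosses over them in the same way.
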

 
 \begin{proof}
 Since the K\"ahler immersion $f$ is isometric, by Proposition 6 in \cite{[C]}  about distasis function, we have
 $$\frac{1}{(\varphi-|w|^2)^{\alpha}}=\sum_{j=0}^{\infty}|f_{j}|^{2},$$
 for $f=[f_{0},\cdots,f_{j},\cdots]$. 
By using equation \eqref{hk} and Bocher coordinates, the power expansion  is
\begin{eqnarray}
&&\sum_{m=1}^{\infty}\sum_{j,k}
 \left(\frac{\partial ^{|m_{j}+|m_k|}}{\partial z^{m_{j}} \partial \bar{z}^{m_{k}}}
 \frac{\partial ^{2m}}{\partial w^{m} \partial \bar{w}^{m}}\frac{1}{(\varphi-|w|^{2})^{\alpha}}\right)|_{0}\frac{z^{m_j}\bar{z}^{m_k}w^{m}\bar{w}^{m}}{m_j !
  m_k !m!^{2}}\\
&=&
\sum_{m=1}^{\infty}\sum_{j,k}
\left(\frac{\partial ^{|m_{j}+|m_k|}}{\partial z^{m_{j}} \partial \bar{z}^{m_{k}}}
 \frac{(m+\alpha-1)!m!}{(\alpha-1)!\varphi^{(\alpha+m)}}\right)|_{0}
 \frac{z^{m_j}\bar{z}^{m_k}|w|^{2m}}{m_j ! m_k !m!^{2}}\\
 & =&\sum_{m=1}^{\infty} \sum_{j=1}^{\infty}  \frac{(m+\alpha-1)!}{(\alpha-1)!m!}|w|^{2m}
 |h_{(\alpha+m)}^{j}|^2.
 \end{eqnarray}

It follows by the previous power series expansions, that 
the map $f$ given by \eqref{f}  is a K\"ahler immersion of $\Omega\rightarrow \mathbb{CP}^{\infty}$.
By Calabi's rigidity theorem (i.e. Theorem 2 in \cite{[C]}), all the other K\"ahler immersions are given by $U\circ f$, where $U$ is a unitary transformation of $\mathbb{CP}^{\infty}$.
 \end{proof}
 \begin{lemma}\cite{[Z]}\label{Zedda}
Let $ (M, g)$ be a K\"ahler manifold. If  $(M, ag)$  is full K\"ahler immersion submanifold of $\mathbb{CP}^{n}$ for any $a >a_{0} >0$ and if $(M,g)$ and $\mathbb{CP}^{n}$  are not relatives for any $n<+\infty$, then 
$(M,g)$ and $\mathbb{CP}^{n}$  are not  strongly relatives for any $n<+\infty$. 
 \end{lemma}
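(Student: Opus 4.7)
The plan is to argue by contradiction: assume that $(M,g)$ and $(\mathbb{CP}^n,g_{FS})$ are strongly relatives for some finite $n$, which in the sense of \cite{[Z]} means that there is some constant $c>0$ making $(M,cg)$ and $(\mathbb{CP}^n,g_{FS})$ relatives. Let $(S,g_S)$ be the common K\"ahler submanifold, realised by holomorphic isometric immersions $i_1\colon (S,g_S)\to (M,cg)$ and $i_2\colon (S,g_S)\to(\mathbb{CP}^n,g_{FS})$, so that $i_1^{*}g=c^{-1}g_S$ and $i_2^{*}g_{FS}=g_S$. The ultimate goal is to extract from this a K\"ahler submanifold shared by $(M,g)$ itself and some finite-dimensional $\mathbb{CP}^{N'}$, directly contradicting the unrescaled hypothesis of the lemma.

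First I would lift to a scale where the full immersion hypothesis becomes applicable: pick a positive integer $k>a_0/c$, so that $a:=kc>a_0$, and let $\phi\colon (M,kcg)\to(\mathbb{CP}^\infty,g_{FS})$ be the full K\"ahler immersion provided by the hypothesis. In parallel, the classical $k$-th Veronese embedding $V_k\colon (\mathbb{CP}^n,g_{FS})\to(\mathbb{CP}^{N_k},g_{FS})$ satisfies $V_k^{*}g_{FS}=kg_{FS}$, so composing with $i_2$ exhibits $(S,kg_S)$ as a K\"ahler submanifold of the finite-dimensional $(\mathbb{CP}^{N_k},g_{FS})$. Since $i_1^{*}(kcg)=kg_S$, the map $i_1$ simultaneously realises $(S,kg_S)$ as a K\"ahler submanifold of $(M,kcg)$, and $\phi\circ i_1$ therefore provides a second K\"ahler immersion of $(S,kg_S)$, this time into $\mathbb{CP}^\infty$.

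The next step invokes Calabi's rigidity theorem \cite{[C]}: two K\"ahler immersions of the same connected K\"ahler manifold into complex projective spaces differ by a unitary transformation of a common ambient space. Applied to the two immersions of $(S,kg_S)$ constructed above, this forces $\phi(i_1(S))$ to lie inside a finite-dimensional projective subspace $\mathbb{CP}^{N_k}\subset\mathbb{CP}^\infty$, producing $(M,kcg)$ and $(\mathbb{CP}^{N_k},g_{FS})$ as relatives sharing the submanifold $(S,kg_S)$.

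The main obstacle, and the concluding step, is descending from the scaled metric $kcg$ back to the original metric $g$, because $(kc)^{-1}$ is generically irrational and Veronese embeddings intertwine only integer rescalings. To close the gap I would exploit the fact that the hypothesis supplies full K\"ahler immersions for every $a$ in the continuous range $(a_0,\infty)$, together with Calabi's diastasis characterisation of projectively induced K\"ahler metrics. Comparing the Calabi diastasis expansions of $ag$ for $a$ varying in a neighbourhood of $kc$, and using the real analyticity of $g_M$, one translates the finite-dimensional information extracted at scale $kc$ into a statement that the diastasis of $g$ already has a finite Calabi expansion in a neighbourhood of $i_1(S)$. That yields a K\"ahler immersion of $(S,c^{-1}g_S)\subset(M,g)$ into some finite $\mathbb{CP}^{N'}$, exhibiting $(M,g)$ and $\mathbb{CP}^{N'}$ as relatives and contradicting the hypothesis. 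This final analytic transfer is the delicate part of the argument.
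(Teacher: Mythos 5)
The paper itself does not prove this lemma: it is quoted verbatim from Zedda \cite{[Z]}, so there is no in-paper argument to match yours against; your attempt has to be judged against Zedda's proof and against the way the same mechanism is implemented concretely in the paper's proof of Theorem \ref{phd cp1}. The first half of your argument is sound: rescaling by an integer $k$ with $kc>a_0$, composing $i_2$ with the Veronese map $V_k$, and invoking Calabi's rigidity to conclude that $\phi(i_1(S))$ lies in a finite-dimensional $\mathbb{CP}^{N'}$ are the right ingredients. But, as you yourself concede, this only exhibits $(M,kcg)$ as a relative of a finite-dimensional projective space, and the hypothesis forbids this for $(M,g)$, not for $(M,kcg)$.

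The concluding ``analytic transfer'' is where the proof genuinely breaks. You propose to deduce from the finite-dimensional information at scale $kc$ that the diastasis of $g$ itself has a finite Calabi expansion near $i_1(S)$, i.e.\ that $(S,c^{-1}g_S)$ immerses into some finite $\mathbb{CP}^{N''}$. Calabi's theory says precisely the opposite: projective inducedness is not stable under non-integer rescaling (already for a disc $S\subset\mathbb{CP}^1$ with the restricted Fubini--Study metric, $\lambda\, g_{FS}|_S$ is projectively induced into a finite-dimensional target only for $\lambda\in\mathbb{Z}^{+}$), so no comparison of diastasis expansions for $a$ near $kc$ will let you ``divide by $kc$.'' The intended argument avoids the descent altogether: one derives the contradiction entirely at the rescaled level, by showing that the restriction $\phi\circ i_1$ of the full immersion to the common submanifold remains \emph{full} into the genuinely infinite-dimensional $\mathbb{CP}^{\infty}$ --- this is where the hypothesis that $(M,g)$ is not relative to any finite-dimensional $\mathbb{CP}^{n}$ is actually consumed --- and such fullness is flatly incompatible, by Calabi's rigidity, with the finite-dimensional immersion $V_k\circ i_2$ of the same metric $kg_S$. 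Compare the proof of Theorem \ref{phd cp1}, where exactly this non-degeneracy of the restricted immersion is established by hand (linear independence of the components $s_m(\Psi_0)$, using the non-relativity of the base). Your argument runs rigidity in the opposite direction, concluding degeneracy rather than excluding it, and therefore never reaches a contradiction.
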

 
 \begin{theorem}\label{phd cp1}
 Let $\Omega$ be  as in \eqref{equ:Hartogs domain} and  $h$ be a positive number. Suppose that $\Omega$ is
a simply connected circular domain with center zero and
the polarization of  $-\log\varphi$ is the diastatic function of  $g^{D}$.
If $(D, (h+\sigma)g^{D})$  admits a K\"ahler immersion into
$(\mathbb{CP}^{n}, g_{FS})$ or $(\mathbb{CP}^{\infty}, g_{FS})$ for all $\sigma\in \mathbb{N}$ and $(D, g^{D})$ is strongly not relative to any projective manifold,
 then it is  strongly not relative to any projective manifold.
 \end{theorem}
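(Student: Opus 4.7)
The plan is to verify the two hypotheses of Lemma \ref{Zedda} for $(M,g)=(\Omega, g^{\Omega})$, from which strong non-relativity of $(\Omega, g^{\Omega})$ with any projective K\"ahler manifold follows immediately. The first hypothesis, a full K\"ahler immersion of $(\Omega, ag^{\Omega})$ into projective space for all $a>a_{0}$, is supplied by Lemma \ref{psd cp}. Indeed, for each $k\in\mathbb{N}$, reindexing the standing hypothesis as $\sigma'=k+\sigma$ shows that $(D,(h+k+\sigma)g^{D})$ admits a K\"ahler immersion into $(\mathbb{CP}^n, g_{FS})$ or $(\mathbb{CP}^{\infty}, g_{FS})$ for every $\sigma\in\mathbb{N}$. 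Lemma \ref{psd cp} then produces a full K\"ahler immersion of $(\Omega,(h+k)g^{\Omega})$ into $(\mathbb{CP}^{\infty}, g_{FS})$ for each $k$. Taking $a_{0}=h$ supplies the first hypothesis of Lemma \ref{Zedda}.

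For the second hypothesis, that $(\Omega, g^{\Omega})$ is not relative to any $(\mathbb{CP}^{n}, g_{FS})$ with $n<\infty$, I argue by contradiction. Suppose there is a common K\"ahler submanifold $S$ with holomorphic isometric immersions $\phi:S\to\Omega$ and $\psi:S\to\mathbb{CP}^{n}$. I would compose $\phi$ with the explicit full K\"ahler immersion $f=[1,s,H]:(\Omega,hg^{\Omega})\to\mathbb{CP}^{\infty}$ from Lemma \ref{lem f}, whose entries interlace the base variable $z$ and the fiber variable $w$ through the terms $h^{j}_{h+m}(z)w^{m}$; this yields a K\"ahler immersion $f\circ\phi:(S,hg_{S})\to\mathbb{CP}^{\infty}$. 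Combining this with the scaled immersion coming from $\psi$ and invoking Calabi's rigidity theorem produces algebraic relations among the components of $\phi$. By separating the $w$-graded contributions, I would extract from $S$ a common K\"ahler submanifold between $(D,(h+\sigma)g^{D})$ and a finite-dimensional projective K\"ahler manifold, contradicting the strong non-relativity of $(D,g^{D})$.

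The main obstacle is precisely this descent step. Because $f$ multiplicatively couples $z$ and $w$, the image $\phi(S)\subset\Omega$ does not project canonically to a K\"ahler submanifold of $D$. Separating the contributions from different $m$ requires an algebraic argument in the spirit of the Nash-function techniques of Section 2: fixing a fiber value $w=w_{0}$ and treating the coefficients $\{h^{j}_{h+m}(z)w_{0}^{m}\}_{m\geq 0}$ as algebraically independent components of the immersion, one recovers a genuine K\"ahler immersion of a slice containing $(D,(h+\sigma)g^{D})$ into finite-dimensional projective space. Once this separation is made precise, the strong non-relativity of $(D,g^{D})$ produces the contradiction, and Lemma \ref{Zedda} delivers the conclusion.
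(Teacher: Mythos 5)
Your overall skeleton matches the paper's: verify the two hypotheses of Lemma \ref{Zedda}, using Lemma \ref{psd cp} (with the reindexing $\sigma'=k+\sigma$, which is fine) for the existence of full immersions of $(\Omega, a g^{\Omega})$, and a separate contradiction argument for the non-relativity of $(\Omega, g^{\Omega})$ with finite-dimensional projective space. But the second hypothesis is where all the content lies, and you have left it open: the ``descent'' you propose --- separating the $w$-graded contributions of $f\circ\phi$ so as to manufacture a common K\"ahler submanifold of $(D,(h+\sigma)g^{D})$ and a finite-dimensional projective manifold --- is exactly the step you admit you cannot carry out, and it is not how the argument goes. The multiplicative coupling $h^{j}_{h+m}(z)w^{m}$ is a genuine obstruction to projecting $\phi(S)$ onto the base, and no Nash-function machinery from Section 2 is used in this part of the paper.

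The paper's route avoids the descent entirely. Given local maps $\Phi\colon U\to\mathbb{CP}^{n}$ and $\Psi=(\Psi_{0},\Psi_{1},\dots,\Psi_{d})\colon U\to\Omega$ with $\Phi^{*}\omega_{FS}=\Psi^{*}\omega_{\Omega}$, one shows directly that $f\circ\Psi\colon U\to\mathbb{CP}^{\infty}$ is \emph{full}. The hypothesis that $(D,g^{D})$ is strongly not relative to any projective manifold is invoked only once, to rule out the degenerate case $\partial\Psi_{0}/\partial\xi\equiv 0$: for then $\Psi(U)$ would lie in the slice $w=0$, i.e.\ in $D$ itself, making $U$ a common K\"ahler submanifold of $D$ and $\mathbb{CP}^{n}$. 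Once $\Psi_{0}$ is non-constant its image is open in $\mathbb{C}$, so any relation $\sum_{m} a_{m}s_{m}(\Psi_{0})=0$ on $U$ forces $\sum_{m} a_{m}s_{m}=0$ on an open subset of $\mathbb{C}$ and hence $a_{m}=0$, since the $s_{m}$ are distinct monomials in $w$; non-degeneracy together with Lemma 2.2 of \cite{[DL2]} then gives fullness of $f\circ\Psi$. The contradiction is now immediate from Calabi's rigidity theorem: $U$ cannot simultaneously admit a full K\"ahler immersion into $\mathbb{CP}^{\infty}$ and a K\"ahler immersion into a finite-dimensional $\mathbb{CP}^{n}$. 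You should replace your descent step by this fullness argument; as written, your proposal does not constitute a proof.
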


 \begin{proof} 
 We prove that the pseudoconvex-Hartogs domain  is not relative to any projective manifold firstly.
Assume that $S$ is a $1$-dimensional common K\"ahler
submanifold of $\mathbb{CP}^n$ and $(\Omega,g_{\Omega})$. Then around each point $p \in S$ there
exists an open neighbourhood $U$ and two holomorphic maps $\Phi : U \rightarrow \mathbb{CP}^n$ and
$\Psi : U \rightarrow \Omega$,  $\Psi(\xi)=(\Psi_{0}(\xi),\Psi_{1}(\xi),\cdots,\Psi_{d}(\xi))=(w,z)$ , where $\xi$ are coordinates on $U$, such that
$\Phi^{*}\omega_{FS}|_{U}=\Psi^{*}(\omega_{\Omega})|_{U}$. 
   Let $f  = [f_{0},...,f_{j},...] : \Omega \rightarrow \mathbb{CP}^{\infty}$ be the full K\"ahler map given by Lemma \ref{lem f} from $(\Omega, g) $ into $\mathbb{CP}^{\infty}$.
   Then we have $$\mathbb{CP}^{n}\xleftarrow[]{\Phi} U\xrightarrow[] {\Psi}\Omega \xrightarrow[] {f} \mathbb{CP}^{\infty}.$$
We claim that   $f\circ \Psi:U\rightarrow \mathbb{CP}^{\infty}=[s(\Psi_{0}), H(\Psi_{1},\cdots,\Psi_{d})]$ is full. 
   Actually, we only need to prove that $\{s_{m}(\Psi_{0})\}_{m=1}^{\infty}$ are linearly independent.
The formula of $f$ given by Lemma \ref{lem f} implies that
  $\{s_{m}\}$   are linearly independent  subsequence of  $ \{f_{j}\}$. 

Let $q$ be any positive integer and assume that  there exist $q $ complex numbers  $a_{0},\cdots,a_{q}$ such
that 
\begin{equation}\label{equ}
a_{0}s_{0}(\Phi_{0}(\xi))+\cdots+a_{q}s_{q}(\Phi_{0}(\xi))=0, \xi \in U.
\end{equation}

It is worth to point out that $\frac{\partial \Psi_{0}}{\partial \xi}\neq 0$. Otherwise, $\Phi (U) \subset \Omega|_{w=0}=D$ ,
$U$ is a common submanifolds of $(D, a g_{B}) $ and $\mathbb{CP}^{n}$ which is contradict with Loi's result.
By the assumption on $\Phi_{0}: U\rightarrow \mathbb{C}$, it follows that $\Phi_{0}(U)$  is an open subset of $\mathbb{C}$.
 Therefore, equality \eqref{equ} is satisfied on all $\mathbb{C}$. Since $s_{1},\cdots, s_{q}$ 
   are linearly independent, so $a_{j}=0$. Therefore, $\{s_{m}(\Psi_{0})\}_{m=1}^{\infty}$ are linearly independent which implies  $$(f\circ\Phi)(\xi)=[s(\Phi_{1}), H(\Phi_{1}(\xi),\cdots,\Phi_{d}(\xi))]$$
   is non-degenerate. By  Loi's Lemma 2.2  in \cite{[DL2]}, $f\circ \Psi:U\rightarrow \mathbb{CP}^{\infty}$ is full.   
   On the other hand, $\Phi : U \rightarrow \mathbb{CP}^n$ is a K\"ahler immersion.
 It is contradict to Calabi's rigidity theorem  in \cite{[C]}.
   
Finially,  combine  Lemma  \ref{psd cp} with Lemma \ref{Zedda},
  pseudoconvex-Hartogs domain is strongly not relative to $\mathbb{CP}^{n}$ for any $n$ and   any projective manifold.

 \end{proof}
 


By the same method, we can obtain two results below.
\begin{theorem}\label{phd c1}
Let $\Omega$ be  as in \eqref{equ:Hartogs domain}. Suppose that $\Omega$ is
a simply connected circular domain with center zero and
the polarization of  $-\log\varphi$ is the diastatic function of  $g^{D}$.
If $(D, g^{D})$ admits a K\"ahler immersion into $(\mathbb{C}^{n}, g_{0})$ or $(\mathbb{C}^{\infty}, g_{0})$ and
$(D, g^{D})$ is not relative to $(\mathbb{C}^{n}, g_{0})$.
Then $(\Omega, g^{\Omega})$ is not relative to $(\mathbb{C}^{n}, g_{0})$.
\end{theorem}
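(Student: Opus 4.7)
The plan is to transplant the three-step blueprint of Theorem \ref{phd cp1} to the flat setting, replacing $\mathbb{CP}^{\infty}$ by $\mathbb{C}^{\infty}$ and the projective Calabi rigidity by its Euclidean counterpart. For the first step I would construct an explicit full K\"ahler immersion $f:\Omega\to\mathbb{C}^{\infty}$ in the spirit of Lemma \ref{lem f}. Since $-\log\varphi$ is by hypothesis the diastatic function of $g^{D}$ centered at $0$ (so $\varphi(0)=1$) and $(D,g^{D})$ admits a K\"ahler immersion into $\mathbb{C}^{n}$ or $\mathbb{C}^{\infty}$, Calabi's diastatic criterion \cite{[C]} produces holomorphic functions $h_{1},h_{2},\ldots$ on $D$ vanishing at $0$ with $-\log\varphi(z)=\sum_{j}|h_{j}(z)|^{2}$. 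Combining Lemma \ref{lem:diastasis} with the expansion
\begin{equation*}
-\log\bigl(\varphi(z)-\|\xi\|^{2}\bigr)=\sum_{j}|h_{j}(z)|^{2}+\sum_{m\ge 1}\frac{\|\xi\|^{2m}}{m}\,\exp\!\Bigl(m\sum_{j}|h_{j}(z)|^{2}\Bigr),
\end{equation*}
and re-expanding each $\exp\bigl(m\sum_{j}|h_{j}|^{2}\bigr)$ as a sum of squares of polynomials in the $h_{j}$'s, I obtain a countable family of holomorphic functions on $\Omega$ whose squared moduli sum to the diastasis of $g^{\Omega}$. These components assemble into a full K\"ahler immersion $f=(s,H):\Omega\to\mathbb{C}^{\infty}$, in which the block $s(\xi)=(s_{m}(\xi))_{m\ge 1}$ consists of nonzero constant multiples of the fiber monomials $\xi^{\alpha}$, $|\alpha|\ge 1$, and is therefore linearly independent and depends only on $\xi$.

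For the second and third steps, assume for contradiction there is a common one-dimensional K\"ahler submanifold realized on a connected open $U$ by $\Phi:U\to\mathbb{C}^{n}$ and $\Psi=(\Psi_{1},\Psi_{2}):U\to D\times\mathbb{C}^{d_{0}}\supset\Omega$ with $\Phi^{*}\omega_{0}=\Psi^{*}\omega_{\Omega}$. If $\Psi_{2}\equiv 0$ on $U$ then $U$ would be a common K\"ahler submanifold of $(D,g^{D})$ and $(\mathbb{C}^{n},g_{0})$, contradicting the hypothesis; so $\Psi_{2}$ is non-constant, and the pull-backs $\{s_{m}(\Psi_{2})\}_{m\ge 1}$ form a linearly independent system of holomorphic functions on $U$. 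By Lemma 2.2 of \cite{[DL2]} the composition $f\circ\Psi:U\to\mathbb{C}^{\infty}$ is therefore full. But $(f\circ\Psi)^{*}\omega_{0}=\Phi^{*}\omega_{0}$ on $U$ and $\Phi$ takes values in the finite-dimensional $\mathbb{C}^{n}$; Calabi's rigidity theorem for immersions into flat space \cite{[C]} then forces $f\circ\Psi$ to factor through an at-most-$n$-dimensional affine subspace of $\mathbb{C}^{\infty}$, contradicting its fullness.

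The main obstacle is the explicit construction in the first step: one must carefully unfold the double series in $(z,\xi)$ into an honest countable system of holomorphic functions on $\Omega$ and verify that its pure-fiber block $s$ persists as a linearly independent family, so that the resulting $f$ is a bona-fide full K\"ahler immersion rather than a formal power series identity. Once this flat analogue of Lemma \ref{lem f} is in place, Steps 2 and 3 are a direct translation of the proof of Theorem \ref{phd cp1}.
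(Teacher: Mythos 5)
Your proposal is correct and follows essentially the route the paper intends: the paper disposes of Theorem \ref{phd c1} with the remark that it follows ``by the same method'' as Theorem \ref{phd cp1}, namely composing the hypothetical common submanifold with a full K\"ahler immersion $f:\Omega\to\mathbb{C}^{\infty}$, checking that the pure-fiber block retains infinitely many linearly independent components unless the submanifold sits inside $D\times\{0\}$ (excluded by hypothesis), and then contradicting Calabi's rigidity against the finite-dimensional map $\Phi$. Your only addition is to make explicit the flat analogue of Lemma \ref{lem f} via the logarithmic expansion of the diastasis of $g^{\Omega}$, a step the paper leaves implicit.
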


 \begin{theorem}\label{phd ch1}
Let $\Omega$ be  as in \eqref{equ:Hartogs domain} and  $h$ be a positive number. Suppose that $\Omega$ is
a simply connected circular domain with center zero and
the polarization of  $-\log\varphi$ is the diastatic function of  $g^{D}$.
If  $(D, hg^{D})$ is a K\"ahler submanifold of $(\mathbb{CH}^{\infty}, g_{hyp})$ for $0<h\leq1$ and 
$(D, hg^{D})$ is  not relative to $\mathbb{CH}^{n}$,
 then  it is  not relative to $(\mathbb{CH}^{n}, g_{hyp})$.
 \end{theorem}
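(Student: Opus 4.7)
The plan is to mirror the proof of Theorem \ref{phd cp1} in the hyperbolic setting, replacing the Fubini--Study power series expansion from Lemma \ref{lem f} by its negative-curvature analog and invoking Calabi's rigidity theorem to contradict the fullness of a composed immersion.

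First, I would establish the hyperbolic counterpart of Lemma \ref{lem f}: since the polarization of $-h\log\varphi$ is the diastasis of $hg^{D}$ centered at zero, Lemma \ref{lem:diastasis} says $-h\log(\varphi-\|\xi\|^{2})$ is the diastasis of $hg^{\Omega}$ at zero. Any K\"ahler immersion $f=[f_{0},f_{1},\dots]\colon (\Omega,hg^{\Omega})\to(\mathbb{CH}^{\infty},g_{hyp})$ must therefore satisfy
$$1-\sum_{j\ge 1}|f_{j}|^{2}=(\varphi-|w|^{2})^{h},$$
by Calabi's diastasis characterization (Proposition 6 in \cite{[C]}). Factoring $(\varphi-|w|^{2})^{h}=\varphi^{h}(1-|w|^{2}/\varphi)^{h}$, expanding the second factor as a binomial series (legitimate because $0<h\le 1$ and $|w|^{2}<\varphi$), and combining with the given immersion $\tilde h_{h}=(1,h_{h})\colon(D,hg^{D})\to\mathbb{CH}^{\infty}$ satisfying $1-\sum_{j}|h_{h}^{j}|^{2}=\varphi^{h}$, I would write $f=[1,s,H]$ where $s=(s_{1},s_{2},\dots)$ with $s_{m}=\beta_{m}(h)w^{m}$ for nonzero coefficients $\beta_{m}(h)$, and $H$ collects the $D$-dependent components of the form $\gamma_{m,k}(h)\, h^{k}_{h+m}w^{m}$ (or their higher-order analogs). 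The essential feature is that $\{s_{m}\}_{m\ge 1}$ are linearly independent holomorphic functions of $w$.

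Second, arguing by contradiction, suppose $(\Omega,g^{\Omega})$ and $\mathbb{CH}^{n}$ share a one-dimensional K\"ahler submanifold $S$. At $p\in S$ there exist a neighborhood $U\subset\mathbb{C}$ with coordinate $\xi$ and holomorphic $\Phi\colon U\to\mathbb{CH}^{n}$ and $\Psi=(\Psi_{0},\Psi_{1},\dots,\Psi_{d})\colon U\to\Omega$ with $\Phi^{*}\omega_{hyp}=\Psi^{*}\omega_{\Omega}$ on $U$. Forming the chain
$$\mathbb{CH}^{n}\xleftarrow{\ \Phi\ } U \xrightarrow{\ \Psi\ } \Omega \xrightarrow{\ f\ } \mathbb{CH}^{\infty},$$
I would prove $f\circ\Psi$ is full. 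The key step is $\partial\Psi_{0}/\partial\xi\ne 0$: otherwise $\Psi(U)\subset\{w=0\}\simeq D$, which would make $U$ a common K\"ahler submanifold of $(D,hg^{D})$ and $\mathbb{CH}^{n}$, contradicting the hypothesis. Consequently $\Psi_{0}(U)$ is open in $\mathbb{C}$, so any finite linear relation $\sum_{m=1}^{q}a_{m}s_{m}(\Psi_{0}(\xi))=0$ on $U$ extends by analytic continuation to all of $\mathbb{C}$, forcing the $a_{m}$ to vanish by independence of the $s_{m}$. The $H$-components contribute extra independent directions, and Loi's Lemma 2.2 in \cite{[DL2]} upgrades non-degeneracy into fullness of $f\circ\Psi$.

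Finally, $\Phi\colon U\to\mathbb{CH}^{n}\hookrightarrow\mathbb{CH}^{\infty}$ and $f\circ\Psi\colon U\to\mathbb{CH}^{\infty}$ are two K\"ahler immersions of $U$ inducing the same metric; Calabi's rigidity theorem (Theorem 2 in \cite{[C]}) forces them to agree up to a holomorphic isometry of $\mathbb{CH}^{\infty}$. But then the image of $f\circ\Psi$ lies in the finite-dimensional linear subspace $\mathbb{CH}^{n}$, contradicting fullness. This contradiction proves $(\Omega,g^{\Omega})$ is not relative to $(\mathbb{CH}^{n},g_{hyp})$. The main obstacle is the first step: one must verify, via the binomial expansion for $0<h\le 1$, that the coefficients $\beta_{m}(h)$ and the corresponding mixed coefficients of $H$ can be arranged so that $\sum|s_{m}|^{2}+\sum|H_{k}|^{2}$ converges and correctly reproduces $1-(\varphi-|w|^{2})^{h}$; the sign constraints inherent to the hyperbolic target (norms bounded by $1$) make this more delicate than in the projective case of Lemma \ref{lem f}, and it is where the assumption $0<h\le 1$ is crucially used.
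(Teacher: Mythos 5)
Your proposal is correct and follows exactly the route the paper intends: the paper gives no separate proof of Theorem \ref{phd ch1}, stating only that it follows ``by the same method'' as Theorem \ref{phd cp1}, and your adaptation --- the hyperbolic analog of Lemma \ref{lem f} via the binomial expansion of $(\varphi-|w|^2)^h$ for $0<h\le 1$, the nonvanishing of $\partial\Psi_0/\partial\xi$, linear independence of the $s_m$, Loi's Lemma 2.2, and Calabi's rigidity --- is precisely that method, spelled out in more detail than the paper itself provides.
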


\bibliographystyle{amsplain}

\providecommand{\bysame}{\leavevmode\hbox to3em{\hrulefill}\thinspace}
\providecommand{\MR}{\relax\ifhmode\unskip\space\fi MR }
\providecommand{\MRhref}[2]{
\href{http://www.ams.org/mathscinet-getitem?mr=#1}{#2}}
\providecommand{\href}[2]{#2}

\noindent {Xiaoliang Cheng:\quad
Department of Mathematics, Jilin Normal University, Siping \rm{136000},  PR China
                           Email:  chengxiaoliang92@163.com}\\

\noindent {Yihong Hao:\quad
              Department of Mathematics, Northwest University, Xi'an \rm{710127}, PR China
              Email: haoyihong@126.com}\\

\end{document}